\theoremstyle{plain}
\newtheorem{theorem}{Theorem}[section]
\newtheorem{lemma}[theorem]{Lemma}
\newtheorem{corollary}[theorem]{Corollary}
\newtheorem{conjecture}[theorem]{Conjecture}
\newtheorem*{theorem*}{Theorem}
\theoremstyle{definition}
\newtheorem{definition}[theorem]{Definition}
\newtheorem{example}[theorem]{Example}
\newtheorem{remark}[theorem]{Remark}
\theoremstyle{remark}
\newcommand{\op}{\operatorname}
 \gdef\Young#1{\hbox{$\vcenter
 {\mathcode`,="8000\mathcode`|="8000
  \def,{\global\advance\cols by 1 &}%
  \def|{\cr
        \multispan{\the\cols}\hrulefill\cr
        &\global\cols=2 }%
  \offinterlineskip\everycr{}\tabskip=0pt
  \dimen0=\ht\strutbox \advance\dimen0 by \dp\strutbox
  \halign
   {\vrule height \ht\strutbox depth \dp\strutbox##
    &&\hbox to \dimen0{\hss$##$\hss}\vrule\cr
    \noalign{\hrule}&\global\cols=2 #1\crcr
    \multispan{\the\cols}\hrulefill\cr%
   }
 }$}}
\gdef\Skew(#1:#2){\hbox{$\vcenter
{\mathcode`,="8000\mathcode`|="8000
  \dimen0=\ht\strutbox \advance\dimen0 by \dp\strutbox
  \def\boxbeg{\vbox
    \bgroup\hrule\kern-0.4pt\hbox to\dimen0\bgroup\strut\vrule\hss$}%
  \def\boxend{$\hss\egroup\hrule\egroup}%
  \def,{\boxend\boxbeg}%
  \def|##1:{\boxend\vrule\egroup\nointerlineskip\kern-0.4pt
    \moveright##1\dimen0\hbox\bgroup\boxbeg}%
  \def\\##1\\##2:{\boxend\vrule\egroup\nointerlineskip\kern-0.4pt
    \kern ##1\dimen0\moveright##2\dimen0\hbox\bgroup\boxbeg}%
  \moveright#1\dimen0\hbox\bgroup\boxbeg#2\boxend\vrule\egroup
 }$}}
\title[Unveiling Crystal Embeddings]{Unveiling Crystal Embeddings: New Perspectives on String Polytopes and Atomic Decompositions}
\author{Lara Bossinger, Jacinta Torres}
\begin{document}

\begin{abstract}
We present $n-1$ different  embeddings of string polytopes $\mathcal{S}_{\mathbf{i}}(\lambda) \hookrightarrow \mathcal{S}_{\mathbf{i}}(\lambda + \theta)$ of type $A_{n-1}$, where $\mathbf{i}$ is an arbitrary reduced expression of the longest element of the symmetric group and $\theta$ is the highest root. We characterize their compatibility with the  crystal structure on the string polytopes, and formulate a conjecture describing how to obtain $n-1$ different atomic decompositions of the crystal $\mathcal{B}(k\theta)$, where $\theta$ is the highest root.  
\end{abstract}

\maketitle

\section{Introduction}

String polytopes were introduced by Littelmann \cite{conescrystalspatterns} and Berenstein--Zelevinsky \cite{BerensteinZelevinsky} to parametrize elements in Luztig's dual canonical basis \cite{Lusztig1,Lusztig2}. 
As a generalization of Gelfand--Tsetlin polytopes they are key objects in constructions of toric degenerations of flag and Schubert varieties \cite{caldero:toric,FFL_essential} and a source of deep insights in the interplay of Lie theory and cluster algebras \cite{bea_polyhedral,BossingerFourier}. 
Moreover, they possess a natural crystal structure \cite{Kashiwara:crystal}, which was made precise for type $A_{n-1}$ in \cite{beaglebvolker} using the combinatorics of \textit{wiring diagrams} and Gleizner--Postnikov paths \cite{Gleizer-Postnikov}. 

Atomic decompositions of crystals \cite{LL21} arise in the computation of positive combinatorial formulas for Kostka--Foulkes polynomials, or $q$-analogues of weight multiplicities, which beyond type $A_{n-1}$ remains an open problem. They have been used by Patimo in \cite{Pat} to obtain a geometric proof of Lascoux--Sch\"utzenberger's acclaimed charge formula in type $A_{n-1}$ \cite{lascoux1978conjecture}, suggesting a global approach to obtain positive formulas outside of type $A_{n-1}$. However, even for type $A_{n-1}$ many unanswered questions remain. For instance, there is so far only one known atomic decomposition of type $A_{n-1}$ crystals. 

Given a reduced word $\mathbf{i}$ of the longest element of the Weyl group and a dominant weight $\lambda$, we will denote the corresponding string polytope by $\mathcal{S}_{\mathbf{i}}(\lambda)$. 
In this note we present certain embeddings of string polytopes 

\[\phi_j: \mathcal{S}_{\mathbf{i}}(\lambda) \hookrightarrow \mathcal{S}_{\mathbf{i}}(\lambda + \theta), \]

\noindent
for $j \in [1,n-1]$ where $\lambda$ is a dominant weight of type $A_{n-1}$, $\theta$ is the highest root of the corresponding root system, and $\bf{i}$ is a reduced expression for the longest element  of $S_n$. Moreover, we show in Theorem \ref{thm:epsilon and phi after adding wt zero} and Corollary \ref{thm:crystal1} that for $j = 1, n-1$ these embeddings $\phi_{j}$ commute with the crystal operators, whenever they are defined in the domain. The other operators $\phi_{j}$ also present certain compatibility with the crystal structure - we describe this in detail as well in Corollary \ref{cor:proj}. We conclude the paper with Conjecture \ref{conj:atoms}, describing in it a way to produce atomic decompositions for the string polytope $\mathcal{S}_{\mathbf{i}}(k\theta)$, one for each $j \in [1,n-1]$. This is very desirable, since, so far, there is only one known way, due to Lenart--Lecouvey and Patimo, who present different but equivalent approaches, of producing an atomic decomposition of crystals of type $A_{n-1}$ \cite{Pat, LL21}. We remark that finding atomic decompositions of crystals using such embeddings is in close relation with the computation of pre-canonical bases on affine Hecke algebras  \cite{LPP}.\\

The approach followed in this article is inspired by recent joint work of  Patimo with JT \cite{PT23}, where positive combinatorial formulas for Kostka--Foulkes polynomials of low rank, outside of type $A_{n-1}$,  were found using atomic decompositions of crystals obtained using embeddings very similar to those we present in the present work. This is an unusual instance of a discovery in type $A_{n-1}$ which was inspired by what is happenning for the representation theory of another classical Lie algebra! \\

{\bf Acknowledgements.} 
We are grateful to Bea de Laporte (former Schumann) for inspiring discussions and patiently explaining her results and observations. 
We thank Leonardo Patimo for the insightful discussions about atomic decompositions.
LB is supported by the UNAM DGAPA PAPIIT project IA100724 and the CONAHCyT project
CF-2023-G-106.
JT is supported by the grant SONATA NCN UMO-2021/43/D/ST1/02290 and partially supported by the grant MAESTRO NCN UMO-2019/34/A/ST1/00263.

\section{Crystals and string polytopes}

A crystal for a complex finite dimensional Lie algebra $\mathfrak{g}$ consists of a non-empty set $B$ together with maps 
\begin{align*}
\operatorname{wt}:& B \longrightarrow X \\
e_{i},f_{i}:& B \longrightarrow B \sqcup \left\{ 0\right\}, i \in [1, \operatorname{rank}(\mathfrak{g})]
\end{align*}

\noindent such that for all $b,b' \in B$:

\begin{enumerate}	
\item $b' = e_i(b)$ if and only if $b = f_i(b')$,
\item if $f_i(b) \neq 0 $ then $\textsf{wt}(f_i(b)) = \textsf{wt}(b)-\alpha_i$;
\item
if $e_i(b) \neq 0$, then
$\textsf{wt}(e_i(b)) = \textsf{wt}(b)+\alpha_i$, and
\item  $\varphi_i(b)-\varepsilon_i(b)=  \langle \textsf{wt}(b),\alpha_i^\vee  \rangle$,
\end{enumerate}

\noindent where 
$
\varepsilon_i(b)=\max\{a \in \mathbb{Z}_{\geq 0} :e_i^a(b)\neq 0\} $  and $\varphi_i(b)=\max\{a \in \mathbb{Z}_{\geq 0 }:f_i^a(b)\neq 0\}$, and where $X$ is the lattice of integral weights, $X^{\vee}$ is the lattice of integral coweights, $\left\{ \alpha_{i} \right\}_{i \in [1, \operatorname{rank}(\mathfrak{g})]} \subset X$ is the set of simple roots, $\alpha^{\vee}_{i} \in X^{\vee}$ is the coroot associated to the simple root $\alpha_{i}$, and $ \langle - , - \rangle : X \times X^{\vee} \rightarrow \mathbb{Z}$ is the associated pairing.\\

To each such crystal $B$ is associated a \textit{crystal graph}, a coloured directed graph with vertex set $B$ and edges coloured by elements $i \in [1,\operatorname{rank}(\mathfrak{g})]$, where if $f_{i}(b) = b'$ there is an arrow $b \overset{i}{\rightarrow} b'$. A crystal is irreducible if its corresponding crystal graph is connected and finite. A seminormal crystal is called normal if it is isomorphic to the crystal of a representation of $\mathfrak{g}$. 
Irreducible normal crystals are thus indexed by dominant integral weights of $\mathfrak{g}$. 
We refer the reader to \cite{BSch17} for more background on crystals.

\subsection{Tensor products of crystals}
Given two seminormal crystals $X$ and $Y$, their tensor product $X\otimes Y$ is defined as follows. As a set, it is the cartesian product $X \times Y$. The crystal operators are defined, for each $i \in [1, \operatorname{rank}(\mathfrak{g})]$, $x \in X$ and $b \in B$ as follows: 

  \begin{equation}
    f_{i}(x\otimes y) = 
    \begin{cases}
      f_{i}(x) \otimes y & \text{if} \mbox{  } \varphi_{i}(y)\leq \varepsilon_{i}(x),\\
      x \otimes f_{i}(y)        & \text{if} \mbox{  } \varphi_{i}(y)> \varepsilon_{i}(x).
    \end{cases}
  \end{equation}

\noindent and

  \begin{equation}
    e_{i}(x\otimes y) = 
    \begin{cases}
      e_{i}(x) \otimes y & \text{if} \mbox{  } \varphi_{i}(y)< \varepsilon_{i}(x),\\
      x \otimes f_{i}(y)        & \text{if} \mbox{  } \varphi_{i}(y)\geq \varepsilon_{i}(x).
    \end{cases}
  \end{equation}

\noindent Also, define

\begin{align*}
\varphi_{i}(x\otimes y) &= \operatorname{max}\{\varphi_{i}(x), \varphi_{i}(y) + \langle \operatorname{wt}(y) , \alpha^{\vee}_{i} \rangle\} \\
\varepsilon_{i}(x\otimes y) &= \operatorname{max}\{\varepsilon_{i}(x), \varepsilon_{i}(y) - \langle  \operatorname{wt}(y) , \alpha^{\vee}_{i} \rangle\}.
\end{align*}


An element $b \in B$ is called a \textbf{highest}, respectively \textbf{lowest} weight vertex, if $\varepsilon_{i}(b) = 0 \mbox{  } \forall i \in [1, \operatorname{rank}(\mathfrak{g})]$, respectively $\varphi_{i}(b) = 0 \mbox{  } \forall i \in [1, \operatorname{rank}(\mathfrak{g})]$. The weight of a lowest weight vertex is called a lowest weight; respectively, a highest weight. If the crystal is irreducible, the highest and lowest weight vertices are unique. For a dominant integral weight  $\lambda$ we denote by $\mathcal{B}(\lambda)$ the corresponding  crystal associated to the irreducible representation of $\mathfrak{g}$ 
of highest weight $\lambda$. We denote its highest weight element by $b_{\lambda}$. 

\begin{remark}
\label{rem1}
Our convention for tensor products coincides with \cite{BSch17}; it is in particular opposite to Kashiwara's \cite{kashiwara1990crystalizing, kashiwara1991crystal, kashiwara1994crystal}.
The tensor product of crystals is an associative operation, as is shown in \cite[Proposition 2.32.]{BSch17}.
\end{remark}

\subsection{String cones and polytopes}

In this section we recall a model, introduced by Berenstein--Zelevinsky and Littelmann, for the irreducible highest weight crystals $\mathcal{B}(\lambda)$, which only depends on the choice of a reduced expression $\mathbf{i} = (i_1,\cdots i_N)$ of the longest element $w_0$ of the Weyl group $W$. We follow \cite{conescrystalspatterns, BerensteinZelevinsky}. Now, given a vector $\underline{a} = (a_1, \cdots a_N) \in \mathbb{Z}_{>0}^{N}$, consider 

\[f_{\underline{a}}b_\lambda := f^{a_1}_{i_1}\cdots f^{a_N}_{i_N}(b_{\lambda}),\]

\noindent
where $v_{\lambda} \in \mathcal{B}(\lambda)$ is the highest weight element. 

\begin{definition}
    \label{adaptedstring}
A vector $\underline{a} = (a_1, \cdots a_N) \in \mathbb{Z}_{>0}^{N}$ is called an \textit{adapted string} if $f_{\underline{a}}b_\lambda \neq 0$ and 

\[e_{i_k}(f^{a_{k+1}}_{i_k+1}\cdots f^{a_N}_{i_N}(b_{\lambda})) = 0\]

\noindent for all $1 \leq k \leq N-1$. The cone $\mathcal{S}_{\underline{i}} \subset \mathbb{R}^{N}$ generated by all the adapted strings is called the \textit{string cone} or \textit{cone of adapted strings} associated to $\mathbf{i}$. 
\end{definition}

\begin{theorem}[Berenstein--Zelevinsky \cite{BerensteinZelevinsky}]
The cone $\mathcal{S}_{\underline{i}}$ is a semigroup, that is, $x+y \in \mathcal{S}_{\underline{i}}$ for $x,y \in \mathcal{S}_{\underline{i}}$.
\end{theorem}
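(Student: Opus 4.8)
Read literally the statement is immediate, since $\mathcal{S}_{\mathbf{i}}$ is a cone; the content is the refined claim that the \emph{adapted strings} themselves form a sub-semigroup of $\mathbb{Z}^{N}$, i.e. that the sum of an adapted string for a dominant weight $\lambda$ and an adapted string for a dominant weight $\mu$ is again an adapted string, for $\lambda+\mu$. (At the level of lattice points this is the Minkowski inclusion $\mathcal{S}_{\mathbf{i}}(\lambda)+\mathcal{S}_{\mathbf{i}}(\mu)\subseteq\mathcal{S}_{\mathbf{i}}(\lambda+\mu)$.) So the plan is to prove exactly that.

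I would work inside the tensor product $\mathcal{B}(\lambda)\otimes\mathcal{B}(\mu)$, using the embedding $\mathcal{B}(\lambda+\mu)\hookrightarrow\mathcal{B}(\lambda)\otimes\mathcal{B}(\mu)$ that sends $b_{\lambda+\mu}$ to $b_{\lambda}\otimes b_{\mu}$ and realizes $\mathcal{B}(\lambda+\mu)$ as the connected component of $b_{\lambda}\otimes b_{\mu}$; on that component the operators $e_{i},f_{i}$ and the functions $\varepsilon_{i},\varphi_{i}$ coincide with the intrinsic ones. Given adapted strings $\underline{a}$ for $\lambda$ and $\underline{b}$ for $\mu$, set
\[
z_{k}:=f_{i_{k+1}}^{a_{k+1}+b_{k+1}}\cdots f_{i_{N}}^{a_{N}+b_{N}}(b_{\lambda}\otimes b_{\mu}),\qquad k=N,N-1,\dots,0,
\]
so $z_{N}=b_{\lambda}\otimes b_{\mu}$ and $z_{0}=f_{\underline{a}+\underline{b}}(b_{\lambda+\mu})$, and prove by downward induction on $k$ that $z_{k}\neq 0$ and $\varepsilon_{i_{k}}(z_{k})=0$. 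The case $k=0$ of the first assertion is $f_{\underline{a}+\underline{b}}(b_{\lambda+\mu})\neq 0$, and the vanishing assertions are precisely the conditions of Definition~\ref{adaptedstring} for $\lambda+\mu$, so this suffices. For the inductive step one writes $z_{k+1}=p_{k+1}\otimes q_{k+1}$ and applies the tensor rule for $f_{i_{k+1}}$: this operator first lowers the right factor $q_{k+1}$ until $\varphi_{i_{k+1}}$ of the right factor reaches $\varepsilon_{i_{k+1}}$ of the left factor, and only then lowers the left factor $p_{k+1}$. Writing $z_{k}=p_{k}\otimes q_{k}$, the adaptedness hypotheses on $\underline{a}$ and $\underline{b}$ control the relevant $\varepsilon_{i_{k}}$- and $\varphi_{i_{k}}$-values of $p_{k}$ and of $q_{k}$, and one reads off $\varepsilon_{i_{k}}(z_{k})=0$ from $\varepsilon_{i_{k}}(p_{k}\otimes q_{k})=\max\{\varepsilon_{i_{k}}(q_{k}),\ \varepsilon_{i_{k}}(p_{k})-\langle\operatorname{wt}(q_{k}),\alpha_{i_{k}}^{\vee}\rangle\}$.

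The main obstacle is identifying the correct inductive invariant on the pair $(p_{k},q_{k})$. Crystal operators do not act one factor at a time, so the naive hope that $z_{k}=x_{k}\otimes y_{k}$ — with $x_{k},y_{k}$ the analogous partial products applied to $b_{\lambda},b_{\mu}$ separately — already fails for $\mathfrak{sl}_{2}$, where the two chains interleave inside the tensor product. What does survive, and what I would propagate through the induction, is: $\varepsilon_{i_{k}}(q_{k})=0$ (the $\underline{b}$-data keeps the right factor $e_{i_{k}}$-highest), $p_{k}$ is reached from $b_{\lambda}$ along the $\underline{a}$-chain, and $\varepsilon_{i_{k}}(p_{k})\le\varphi_{i_{k}}(q_{k})$, which forces the first (vanishing) term of the $\varepsilon$-formula to dominate. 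Checking that this package is preserved when passing from $z_{k+1}$ to $z_{k}$, via the case analysis of the tensor rule, is the real computation. A crystal-free alternative — nearer the spirit of the rest of the paper — is to first establish the description of $\mathcal{S}_{\mathbf{i}}$ by homogeneous linear inequalities (via wiring diagrams and Gleizer--Postnikov paths), from which the semigroup property, and indeed Minkowski-additivity of the string polytopes, follows at once because the extra inequalities cutting $\mathcal{S}_{\mathbf{i}}(\lambda)$ out of $\mathcal{S}_{\mathbf{i}}$ are linear in $\lambda$.
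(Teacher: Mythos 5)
The paper offers no proof of this statement: it is imported verbatim from Berenstein--Zelevinsky, so there is no in-paper argument to compare yours against. Your reading of the statement is the correct one --- as literally phrased (``the cone generated by the adapted strings is closed under addition'') the claim is vacuous, and the actual content is that the adapted strings themselves form a sub-semigroup of $\mathbb{Z}^N$, equivalently that $\mathcal{S}_{\mathbf{i}}(\lambda)+\mathcal{S}_{\mathbf{i}}(\mu)\subseteq\mathcal{S}_{\mathbf{i}}(\lambda+\mu)$. Your tensor-product strategy is the standard route to this (it is essentially the argument in \cite{conescrystalspatterns}), and you are right that the naive factorization $z_k=x_k\otimes y_k$ into the separate partial products fails.

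The gap is that the proposal stops exactly where the proof starts. The invariant you propose --- $\varepsilon_{i_k}(q_k)=0$, ``$p_k$ is reached along the $\underline{a}$-chain'', and $\varepsilon_{i_k}(p_k)\le\varphi_{i_k}(q_k)$ --- is indeed what would let you read off $\varepsilon_{i_k}(z_k)=0$ from the tensor formula, but you neither verify that it propagates nor state it precisely enough to be verifiable. The difficulty is real: since the factors of $z_k$ are \emph{not} the partial products $f_{i_{k+1}}^{a_{k+1}}\cdots f_{i_N}^{a_N}b_\lambda$ and $f_{i_{k+1}}^{b_{k+1}}\cdots f_{i_N}^{b_N}b_\mu$ (already for $\mathfrak{sl}_2$ with $\underline{a}=(1)$ adapted for $2\omega$ and $\underline{b}=(0)$ adapted for $\omega$ one gets $f^{1+0}(b_{2\omega}\otimes b_{\omega})=b_{2\omega}\otimes f b_{\omega}$, so the right factor overshoots its $\underline{b}$-partial product), you cannot simply invoke the adaptedness conditions $e_{i_k}\bigl(f_{i_{k+1}}^{b_{k+1}}\cdots f_{i_N}^{b_N}b_\mu\bigr)=0$ to conclude $\varepsilon_{i_k}(q_k)=0$. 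What is missing is the bookkeeping of how the $a_j+b_j$ applications of $f_{i_j}$ distribute between the two factors at each stage, and the case analysis showing your three-part invariant survives that redistribution; this is the entire proof, not a routine check. The ``crystal-free alternative'' you mention is legitimate but circularly heavy for this purpose: it presupposes the Gleizer--Postnikov polyhedral description \emph{and} the identification of the adapted strings with the lattice points of that polyhedron, a theorem of at least comparable depth, so it relocates rather than removes the difficulty.
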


\begin{theorem}[Littelmann \cite{conescrystalspatterns}]
Let $\mathbf{i} = (i_1, \cdots, i_N)$ be a reduced expression of the longest Weyl group element $w_0$, i.e. $w_0 = s_{i_1} \cdots s_{i_N}$ is reduced. Let $\lambda \in X^{+}$ be a dominant integral weight and let $\mathcal{S}^{\lambda}_{\mathbf{i}}$ be the polytope defined by the inequalities

\begin{align}
\label{lambdaeqns}
\left\{a_{r} \leq \langle \lambda - \sum _{r < j \leq N} a_{j}\alpha_{i_j}, \alpha^{\vee}_{i_{r}}\rangle:   1 \leq r \leq N \right\}
\end{align}

\noindent
Then the lattice points in $\mathcal{S}^{\lambda}_{\mathbf{i}}$ are the adapted strings for $\lambda$ with respect to the reduced decomposition $\mathbf{i}$. 
\end{theorem}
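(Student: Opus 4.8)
\textit{Proof idea.} The plan is to translate the defining conditions of an adapted string into crystal-theoretic data and then verify the two inclusions separately. Given $\underline{a}=(a_1,\dots,a_N)$, set $b^{(N+1)}:=b_{\lambda}$ and recursively $b^{(r)}:=f_{i_r}^{a_r}(b^{(r+1)})$ for $r=N,N-1,\dots,1$, with the convention $f_i(0)=0$. Unravelling Definition~\ref{adaptedstring}, and using $f_i(0)=0$ to see that $f_{\underline a}b_\lambda\ne 0$ forces $b^{(r)}\ne 0$ for all $r$, one checks that $\underline{a}$ is an adapted string for $\lambda$ if and only if, for every $r\in[1,N]$, one has $b^{(r)}\ne 0$ and $\varepsilon_{i_r}(b^{(r+1)})=0$ (the latter being automatic for $r=N$, as $b_\lambda$ is a highest weight vertex). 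I will also use repeatedly that $\operatorname{wt}(b^{(r+1)})=\lambda-\sum_{r<j\le N}a_j\alpha_{i_j}$, which follows by iterating crystal axiom~(2). (The statement is to be read with the convention $\underline{a}\in\mathbb{Z}_{\ge 0}^N$ and with the inequalities defining the string cone $\mathcal{S}_{\underline{i}}$ also imposed, so that $\mathcal{S}^{\lambda}_{\mathbf{i}}$ is a genuine polytope and its boundary lattice points are accounted for.)

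\textit{Every adapted string lies in $\mathcal{S}^{\lambda}_{\mathbf{i}}$.} Fix $r$. From $b^{(r)}=f_{i_r}^{a_r}(b^{(r+1)})\ne 0$ we get $a_r\le\varphi_{i_r}(b^{(r+1)})$, while $\varepsilon_{i_r}(b^{(r+1)})=0$ together with crystal axiom~(4) gives $\varphi_{i_r}(b^{(r+1)})=\langle\operatorname{wt}(b^{(r+1)}),\alpha_{i_r}^{\vee}\rangle=\langle\lambda-\sum_{r<j\le N}a_j\alpha_{i_j},\,\alpha_{i_r}^{\vee}\rangle$. Chaining the two inequalities is exactly the $r$-th inequality of~\eqref{lambdaeqns}; as $r$ was arbitrary, $\underline{a}\in\mathcal{S}^{\lambda}_{\mathbf{i}}$.

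\textit{Every lattice point of $\mathcal{S}^{\lambda}_{\mathbf{i}}$ is an adapted string.} This is the substantial direction, and the place where $\mathbf{i}$ being reduced for $w_0$ is genuinely used. I would argue by downward induction on $r$ with inductive hypothesis ``$b^{(r+1)}\ne 0$ and $\varepsilon_{i_r}(b^{(r+1)})=0$''. The base case $r=N$ is immediate: $b_\lambda$ is highest weight, and the $N$-th inequality reads $a_N\le\langle\lambda,\alpha_{i_N}^{\vee}\rangle=\varphi_{i_N}(b_\lambda)$, so $b^{(N)}\ne 0$. For the inductive step, granting $\varepsilon_{i_r}(b^{(r+1)})=0$, the $r$-th inequality of~\eqref{lambdaeqns} says precisely $a_r\le\varphi_{i_r}(b^{(r+1)})$, whence $b^{(r)}=f_{i_r}^{a_r}(b^{(r+1)})\ne 0$. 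The remaining task is to deduce $\varepsilon_{i_{r-1}}(b^{(r)})=0$, and this is the crux. A first, mild appeal to reducedness is that consecutive letters of a reduced word are distinct, so applying $f_{i_r}^{a_r}$ cannot by itself manufacture an $i_{r-1}$-tail; when $i_{r-1}$ and $i_r$ are not connected in the Dynkin diagram the operators $e_{i_{r-1}}$ and $f_{i_r}$ commute and one is reduced to controlling $\varepsilon_{i_{r-1}}$ on an earlier term $b^{(\bullet)}$. The genuinely delicate situation is when $i_{r-1}$ and $i_r$ are adjacent.

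\textit{Main obstacle.} The essential difficulty is precisely this $\varepsilon$-vanishing step $\varepsilon_{i_{r-1}}(b^{(r)})=0$: one must show the inequalities~\eqref{lambdaeqns} are not merely necessary but also sufficient for the successive strings to ``line up''. I expect this to be the technical heart of the proof, and the natural way to obtain it is to invoke the theory of string parametrizations of Berenstein--Zelevinsky~\cite{BerensteinZelevinsky} and Littelmann~\cite{conescrystalspatterns}: because $\mathbf{i}$ is reduced for $w_0$, iterating the greedy raising operators $e_{i_1}^{\varepsilon_{i_1}},e_{i_2}^{\varepsilon_{i_2}},\dots,e_{i_N}^{\varepsilon_{i_N}}$ carries \emph{every} $b\in\mathcal{B}(\lambda)$ to $b_\lambda$; equivalently, via the standard embedding $\mathcal{B}(\lambda)\hookrightarrow\mathcal{B}(\infty)\otimes T_\lambda$ and the tensor-product rule, the string data of $\mathcal{B}(\lambda)$ is cut out of the polyhedral string cone of $\mathcal{B}(\infty)$ by exactly the halfspaces~\eqref{lambdaeqns}. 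Feeding this in closes the downward induction, and it simultaneously exhibits $b\mapsto(\varepsilon\text{-data})$ as a bijection from $\mathcal{B}(\lambda)$ onto the set of adapted strings, so both sides of the asserted equality have cardinality $\dim V(\lambda)$ --- a convenient consistency check, though not needed once the induction is complete. All the remaining steps are formal manipulations with the tensor-product rule and the crystal axioms; the one genuinely non-formal input is the use of reducedness just described.
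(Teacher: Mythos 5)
This theorem is quoted from Littelmann \cite{conescrystalspatterns}; the paper itself supplies no proof, so there is no internal argument to compare against and your sketch has to be judged on its own. The easy half is done correctly and completely: your reformulation of Definition~\ref{adaptedstring} as ``$b^{(r)}\neq 0$ and $\varepsilon_{i_r}(b^{(r+1)})=0$ for all $r$,'' the weight computation $\operatorname{wt}(b^{(r+1)})=\lambda-\sum_{r<j\le N}a_j\alpha_{i_j}$, and the combination of $a_r\le\varphi_{i_r}(b^{(r+1)})$ with axiom (4) do yield exactly the $r$-th inequality of \eqref{lambdaeqns}. Your parenthetical that the statement must be read as cutting the string \emph{cone} $\mathcal{S}_{\mathbf{i}}$ by the halfspaces \eqref{lambdaeqns} (rather than taking \eqref{lambdaeqns} alone as the definition of the polytope) is also correct and necessary: already for $\mathbf{i}=(2,1,2)$ in Example~\ref{ex:sl3} the inequality $x_3\le x_2$ is a string-cone inequality and is not among \eqref{lambdaeqns}, so without it the stated equality of lattice-point sets would fail.

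The substantial direction, however, is not actually proved. You locate the crux correctly --- establishing $\varepsilon_{i_{r-1}}(b^{(r)})=0$ in the downward induction --- but then discharge it by ``invoking the theory of string parametrizations of Berenstein--Zelevinsky and Littelmann,'' which are precisely the sources this theorem is quoted from. The appeal can be made non-circular by isolating the $\lambda$-independent inputs (that the lattice points of $\mathcal{S}_{\mathbf{i}}$ are exactly the adapted strings of $\mathcal{B}(\infty)$, and that $\varepsilon_i$ is preserved under the strict embedding $\mathcal{B}(\lambda)\hookrightarrow\mathcal{B}(\infty)\otimes T_\lambda$, so the $\varepsilon$-vanishing conditions descend from $\mathcal{B}(\infty)$ to $\mathcal{B}(\lambda)$ once \eqref{lambdaeqns} guarantees $b^{(r)}\neq 0$); that is indeed how the standard argument runs, but as written your appeal is too vague to certify. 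In particular, the claim that the non-adjacent case ``reduces to controlling $\varepsilon_{i_{r-1}}$ on an earlier term'' is hand-waving, and the identification of the halfspaces cutting $\mathcal{B}(\lambda)$ out of the string cone of $\mathcal{B}(\infty)$ with \eqref{lambdaeqns} is asserted rather than derived. In short: right strategy, right diagnosis of where the difficulty sits, complete forward inclusion, but the backward inclusion is outsourced to the literature rather than proved.
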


\section{String polytopes via wiring diagrams}

Throughout the rest of the paper unless stated otherwise we will work with $\mathfrak{g} = \mathfrak{sl}(n,\mathbb{C})$ and fix a reduced expression $\mathbf{i} =(i_{1},..., i_{N})$ of the longest element of the symmetric group $S_{n}$.

\begin{definition}
\label{def:wiringdiagram}
The \textit{wiring diagram} or \textit{pseaudoline arrangement} for $\mathbf{i}$ a diagram consisting of $n$ piecewise straight lines calles \emph{wires} $\ell_i,\hbox{  }  1 \leq i \leq n$, labelled at the left from bottom to top, such that their crossings are dictated by the reduced expression read from left to right. 
This means that the wires are parallel to each other at the left hand side, with the wires at levels $i$ and $i+1$ crossing each other whenever there is an $``i"$ in $\mathbf{i}$. 
See Figure \ref{fig:wiring} for an example.
\end{definition}


\begin{figure}[hbt!]
\begin{tikzpicture}[scale=.75]
    \draw[rounded corners] (-1,0)--(0,0)--(4,4)--(11,4);
    \draw[rounded corners] (-1,1)--(0,1)--(1,0)--(4,0)--(7,3)--(11,3);
    \draw[rounded corners] (-1,2)--(1,2)--(2,1)--(4,1)--(5,0)--(7,0)--(9,2)--(11,2);
    \draw[rounded corners] (-1,3)--(2,3)--(3,2)--(5,2)--(6,1)--(7,1)--(8,0)--(9,0)--(10,1)--(11,1);
    \draw[rounded corners] (-1,4)--(3,4)--(4,3)--(6,3)--(7,2)--(8,2)--(10,0)--(11,0);

    \node[left] at (-1,4) {$\ell_5$};
    \node[left] at (-1,3) {$\ell_4$};
    \node[left] at (-1,2) {$\ell_3$};
    \node[left] at (-1,1) {$\ell_2$};
    \node[left] at (-1,0) {$\ell_1$};

    \node[below] at (.5,-.5) {$1$};
    \node[below] at (1.5,-.5) {$2$};
    \node[below] at (2.5,-.5) {$3$};
    \node[below] at (3.5,-.5) {$4$};
    \node[below] at (4.5,-.5) {$1$};
    \node[below] at (5.5,-.5) {$2$};
    \node[below] at (6.5,-.5) {$3$};
    \node[below] at (7.5,-.5) {$1$};
    \node[below] at (8.5,-.5) {$2$};
    \node[below] at (9.5,-.5) {$1$};
    \draw[->] (-1,0)--(-.5,0);
    \draw[->] (2,0)--(2.5,0);
    \draw[->] (5.5,0)--(6,0);
    \draw[->] (8.75,0)--(8.5,0);
    \draw[<-] (10.5,0)--(10.75,0);

    \draw[->] (.5,.5)--(1,1);
    \draw[->] (4.5,.5)--(5,1);
    \draw[->] (7.5,.5)--(8,1);
    
    \draw[->] (-1,1)--(-.5,1);
    \draw[->] (2.5,1)--(3,1);
    \draw[<-] (6.5,1)--(6.75,1);
    \draw[<-] (10.5,1)--(10.75,1);

    \draw[->] (1.5,1.5)--(2,2);
    \draw[->] (5.5,1.5)--(6,2);

    \draw[->] (0,2)--(.5,2);
    \draw[->] (4.5,2)--(4,2);
    \draw[->] (7.75,2)--(7.5,2);
    \draw[->] (9.5,2)--(10,2);

    \draw[->] (2.5,2.5)--(3,3);

    \draw[<-] (1,3)--(1.5,3);
    \draw[<-] (5,3)--(5.5,3);
    \draw[->] (8.5,3)--(9,3);

    \draw[<-] (2,4)--(2.5,4);
    \draw[->] (7,4)--(7.5,4);
    \draw[thick,rounded corners,blue] (-1,2)--(1,2)--(2,1)--(4,1)--(4.5,.5);
    \draw[thick, blue] (4.5,.5)--(5.5,1.5);
    \draw[thick, blue, rounded corners] (5.5,1.5)--(5,2)--(3,2)--(2,3)--(-1,3);

\end{tikzpicture}
    \label{fig:wiring} 
\caption{The wiring diagram $\op{wd}(\mathbf{i})$ for $n = 5$ and $\mathbf{i} = (1,2,3,4,1,2,3,1,2,1)$ with orientation $(\ell_3,\ell_4)$. In blue, a rigorous path in $\Gamma_3$ determined by its turning points $v_{3,1}$ and $v_{1,4}$.}
\end{figure}

\subsection{Rigorous paths}
\label{diagramnotation}
In this subsection we recall the combinatorics of rigorous paths and how they encode inequalities for string cones following Gleizer and Postnikov \cite{Gleizer-Postnikov} and crystal operators following Genz, Koshevoy and Schumann \cite{beaglebvolker}.

For every pair $(\ell_i,\ell_{i+1})$ with $1\le i\le n-1$ we give an orientation to a wiring diagram by orienting wires $\ell_1,\dots,\ell_i$ from left to right and wires $\ell_{i+1},\dots,\ell_{n}$ from right to left, see Figure~\ref{fig:wiring}. 
We call the orientation of wires  $\ell_1,\dots,\ell_i$ from right to left and wires $\ell_{i+1},\dots,\ell_{n}$ from left to right the \emph{opposite orientation for $(\ell_i,\ell_{i+1})$}. 
Consider an oriented path with three consecutive crossings $v_{k-1}\to v_{k}\to v_{k+1}$ belonging to the same wire $\ell_i$. 
Then $v_k$ is the intersection of $\ell_i$ with some line $\ell_j$, denote this by $v_k=v_{(i,j)}$.
If either $i<j$ and both wires are oriented to the left, or $i>j$ and both wires are oriented to the right, the path is called \emph{non-rigorous}. 
Figure~\ref{fig:rigorous} shows these two situations. A path is called \emph{rigorous} if it is not non-rigorous. 

\begin{definition}\label{def:GPpath} 
Let $\bf{i}$ be a fixed reduced expression of $w_0 \in S_{n}$. 
A \emph{left Gleizer-Postnikov path}, or short \emph{left GP-path}, (respectively, \emph{right Gleizer-Postnikov path}, or short \emph{right GP-path}) is a rigorous path $\gamma$ in $\op{wd}({\bf i})$ endowed with some orientation $(\ell_a,\ell_{a+1})$ (respectively, opposite orientation) for $a\in[n-1]:=\{1,\dots,n-1\}$. 
The set of all such left GP-paths (respectively, right GP-paths) is denoted by $\Gamma_a$ (respectively, $\Gamma_a^*$).
\end{definition}

\begin{figure}[ht]
\centering
\begin{center}
\begin{tikzpicture}[scale=.8]

\draw[rounded corners] (3,0) -- (2,0) -- (1,1) -- (0,1);
        \draw[-<] (3,0) -- (2.5,0);
        \draw[-<] (0.8,1) -- (.5,1);
\draw[rounded corners] (3,1) -- (2,1) -- (1,0) -- (0,0);
    \draw[-<] (3,1) -- (2.5,1);
    \draw[-<] (0.8,0) -- (.5,0);
\draw[<-, ultra thick, red] (1.9,0.9) -- (1.1,0.1);

\begin{scope}[yshift=-2cm]
  \draw[rounded corners] (3,0) -- (2,0) -- (1,1) -- (0,1);
        \draw[-<] (.5,1) -- (.8,1);
        \draw[-<] (2.2,0) -- (2.5,0);
\draw[rounded corners] (3,1) -- (2,1) -- (1,0) -- (0,0);
    \draw[-<] (.5,0) -- (.8,0);
    \draw[-<] (2.2,1)-- (2.5,1);
\draw[->, ultra thick, red] (1.9,0.1) -- (1.1,0.9);
\end{scope}

\begin{scope}[xshift=5cm]
    \draw[rounded corners] (3,0) -- (2,0) -- (1,1) -- (0,1);
        \draw[-<] (3,0) -- (2.5,0);
        \draw[-<] (0.8,1) -- (.5,1);
\draw[rounded corners] (3,1) -- (2,1) -- (1,0) -- (0,0);
    \draw[-<] (3,1) -- (2.5,1);
    \draw[-<] (0.8,0) -- (.5,0);
\draw[<-, ultra thick, red] (1.9,0.1) -- (1.1,0.9);

\begin{scope}[yshift=-2cm]
  \draw[rounded corners] (3,0) -- (2,0) -- (1,1) -- (0,1);
        \draw[-<] (.5,1) -- (.8,1);
        \draw[-<] (2.2,0) -- (2.5,0);
\draw[rounded corners] (3,1) -- (2,1) -- (1,0) -- (0,0);
    \draw[-<] (.5,0) -- (.8,0);
    \draw[-<] (2.2,1)-- (2.5,1);
\draw[->, ultra thick, red] (1.9,0.9) -- (1.1,0.1);
\end{scope}
\end{scope}

\end{tikzpicture}
\end{center}
\caption{On the left (resp. right) the two red arrows are forbidden in left (resp. right) rigorous paths.}\label{fig:rigorous}
\end{figure}
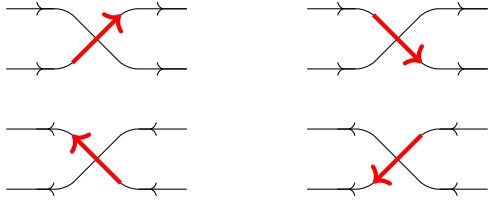

\begin{remark}
    \label{rem:reineke}
Rigurous paths appear as \textit{Reineke crossings} in \cite{beaglebvolker}.
\end{remark}


Rigorous paths allow us to describe the string cones and polytopes comnbinatorially as follows.

\begin{theorem}[\cite{Gleizer-Postnikov} Corollary 5.8, \cite{beaglebvolker} Theorem 6.1]
    Let ${\bf i}$ be a reduced expression of $w_0$. Then the corresponding string cone is given by
    \[
    \mathcal S_{\bf i} = \{x\in \mathbb R^N: \langle x,r(\gamma) \rangle \ge 0 \text{ for all } \gamma \in \Gamma_a^*, a\in [n-1]\}.
    \]
    Let $\lambda=\sum_{a=1}^{n-1}\lambda_a\omega_a \in X^+$. Then the corresponding string polytope is given by
    \[
    \mathcal S_{\bf i}(\lambda) = \{x\in\mathcal S_{\bf i}:  \langle x,r(\gamma') \rangle \le \lambda_a  \text{ for all } \gamma'\in \Gamma_a, a\in [n-1]\}.
    \]
\end{theorem}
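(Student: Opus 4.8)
The plan is to reduce the statement to the inhomogeneous inequalities of Littelmann's theorem and then reorganize those inequalities combinatorially along oriented paths in the wiring diagram, following Gleizer--Postnikov and Genz--Koshevoy--Schumann. First I would recall that by Littelmann's theorem the polytope $\mathcal{S}_{\mathbf{i}}(\lambda)$ is cut out by the $N$ inequalities \eqref{lambdaeqns}, one for each position $r\in[1,N]$ of $\mathbf{i}$, and that the string cone $\mathcal{S}_{\mathbf{i}}$ is the recession cone of this family, equivalently the set obtained by taking those $\mathbb{Z}_{\geq 0}$-combinations of \eqref{lambdaeqns} in which the $\lambda$-dependent right-hand sides cancel. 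The point is that the individual inequalities \eqref{lambdaeqns} are $\lambda$-dependent and are generally not facets, so the real content is to identify which nonnegative combinations are needed and to see that the bookkeeping is governed by rigorous paths in $\op{wd}(\mathbf{i})$.

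Next I would set up the dictionary identifying a position $r$ of $\mathbf{i}$ with a crossing of $\op{wd}(\mathbf{i})$ and with the inequality \eqref{lambdaeqns}, and observe that following a wire $\ell_i$ from one crossing to the next and, at a crossing, either continuing on $\ell_i$ or switching wires, telescopes the corresponding inequalities. A local analysis at a crossing $v_{(i,j)}$ — the heart of the argument — should show that the telescoped sum has coefficient $\pm 1$ at each crossing visited, given precisely by the recipe defining $r(\gamma)$, and that the $\lambda$-terms cancel in pairs \emph{exactly when} the path avoids the two forbidden local configurations of Figure~\ref{fig:rigorous}, i.e. exactly when $\gamma$ is rigorous; a forbidden step contributes a wrong-signed $\langle\lambda,\alpha^\vee\rangle$ that cannot be cancelled. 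Summing along a rigorous path $\gamma\in\Gamma_a^*$ then yields $\langle x,r(\gamma)\rangle\geq 0$, and summing along a rigorous path $\gamma'\in\Gamma_a$ — which by the choice of orientation leaves exactly one uncancelled term $\lambda_a=\langle\lambda,\alpha_a^\vee\rangle$ on the right — yields $\langle x,r(\gamma')\rangle\leq\lambda_a$. This gives the inclusions ``$\subseteq$'' in both displayed equalities, since every adapted string (hence every point of $\mathcal{S}_{\mathbf{i}}$, resp. $\mathcal{S}_{\mathbf{i}}(\lambda)$) satisfies these inequalities.

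The reverse inclusions — that the path inequalities already \emph{suffice} — are the main obstacle, and I would argue this by one of two routes. The first is the direct crystal-theoretic argument of Genz--Koshevoy--Schumann: given $x=\underline{a}$ satisfying all rigorous-path inequalities, show by induction on $N$ (peeling off $f_{i_1}^{a_1}$) that $f_{\underline{a}}b_\lambda\neq 0$ and that $e_{i_k}\big(f_{i_{k+1}}^{a_{k+1}}\cdots f_{i_N}^{a_N}(b_\lambda)\big)=0$ for every $k$, computing the relevant $\varepsilon_{i_k}$ via the tensor-product rule. In the wiring-diagram model the value $\varepsilon_{i_k}$ of the partially lowered vector is a maximum of path sums, and the rigorous-path inequalities are precisely the conditions forcing $a_k\leq\varepsilon_{i_k}(\cdots)$. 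The second route is polyhedral: show that the polyhedron defined by the rigorous-path inequalities has the same dimension and the same vertices (or facets) as $\mathcal{S}_{\mathbf{i}}(\lambda)$ — using that both are lattice polytopes, that their lattice points agree by the ``$\subseteq$'' direction together with Littelmann's count of adapted strings, and that each candidate facet of the path-polytope supports an explicit adapted string (or family thereof).

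I expect two places to require genuine care: the sign-bookkeeping in the telescoping, namely the local crossing analysis and its matching with the definition of $r(\gamma)$ and of the rigorousness condition; and the inductive control of $\varepsilon_{i_k}$ along a path in the crystal-theoretic route (or, in the polyhedral route, producing enough adapted strings on each facet). Once the dictionary between crossings of $\op{wd}(\mathbf{i})$ and the inequalities \eqref{lambdaeqns} is in place, the remaining steps are essentially formal.
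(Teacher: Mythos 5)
First, a point of orientation: the paper does not prove this theorem --- it is imported verbatim from \cite{Gleizer-Postnikov} (Cor.~5.8) and \cite{beaglebvolker} (Thm.~6.1) --- so your proposal has to be judged against those sources, which argue quite differently from you: they show that the system of rigorous-path inequalities is carried to itself by the Berenstein--Zelevinsky/Littelmann piecewise-linear transition maps under braid moves of $\mathbf{i}$, and then verify the statement for a single convenient reduced word where the string cone has a classical Gelfand--Tsetlin-type description. Your plan of telescoping Littelmann's inequalities along oriented paths is a genuinely different route, and for the polytope-capping inequalities (paths $\gamma'\in\Gamma_a$, right-hand side $\lambda_a$) the telescoping picture is sound in spirit.

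The genuine gap is in your treatment of the cone inequalities. You assert that $\mathcal{S}_{\mathbf{i}}$ is the recession cone of the system \eqref{lambdaeqns}, equivalently the set cut out by those nonnegative combinations of \eqref{lambdaeqns} in which the $\lambda$-dependent terms cancel. Neither claim is right. Every inequality in \eqref{lambdaeqns} reads $a_r\le\langle\lambda,\alpha_{i_r}^\vee\rangle+(\text{linear in }a)$, with $\langle\lambda,\alpha_{i_r}^\vee\rangle=\lambda_{i_r}\ge 0$ entering with coefficient $+1$ on the right of a ``$\le$''; since $\lambda$ ranges over all dominant weights, a nonnegative combination cancels the $\lambda$-part only if all its coefficients vanish, so no homogeneous inequality $\langle x,r(\gamma)\rangle\ge 0$ can be produced this way. (Concretely, for $\mathbf{i}=(1,2,1)$ the recession cone of \eqref{lambdaeqns} alone contains vectors with negative entries, whereas $\mathcal{S}_{\mathbf{i}}\subset\mathbb{R}^3_{\ge 0}$.) The inequalities indexed by $\gamma\in\Gamma_a^*$ encode the other half of Definition \ref{adaptedstring}, namely $e_{i_k}\bigl(f^{a_{k+1}}_{i_{k+1}}\cdots f^{a_N}_{i_N}(b_{\lambda})\bigr)=0$, i.e.\ the vanishing of certain $\varepsilon_{i_k}$'s; these conditions are not among \eqref{lambdaeqns} and must be fed into the telescoping separately. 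Your crystal-theoretic route for the converse (controlling $\varepsilon_{i_k}$ via the tensor-product rule) is in fact where this data would enter, and if carried out it would establish both directions for the cone; but as written, the forward inclusion for $\mathcal{S}_{\mathbf{i}}$ is unproved, and the converse in either of your two routes remains a sketch of precisely the hard combinatorial content of the cited theorems.
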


\subsection{Crystal structure}
\label{sec:crystalstructure}
The set of lattice points in the string polytope $\mathcal{S}_{\mathbf{i}}(\lambda)$ has an instrinsic crystal structure which can be described as follows. Clearly, for any lattice point $\underline{x} = (x_{i_1},\dots, x_{i_{N}}) \in \mathcal{S}_{\mathbf{i}}(\lambda)$, as long as 

\[f_{i_1}(f_{\underline{x}}b_{\lambda})\neq 0,\]

we have 

\[f_{i_{i}}(\underline{x}) = (x_{i_1}+1,\dots , x_{i_{N}}).\]

Now, let $s \in [1,n]$ and let $\mathbf{j} = (j_1,\dots j_N)$ be a reduced expression of $w_0$ such that $j_1 = s$. Let $\Phi^{\mathbf{j}}_{\mathbf{i}}: \mathcal{S}_{\mathbf{i}} \rightarrow \mathcal{S}_{\mathbf{j}}$ be the piecewise linear bijection described explicitly in \cite{conescrystalspatterns}. Then 

\[f_{s}(\underline{x}):= (\Phi_{\mathbf{j}}^{\mathbf{i}})^{-1}f_{s}\Phi_{\mathbf{j}}^{\mathbf{i}}(\underline{x}). \]

We now follow mainly \cite{beaglebvolker}, where an explicit formula for effectively computing the crystal structure on the lattice points of $\mathcal{S}_{\mathbf{i}}(\lambda)$ in terms of rigorous paths is constructed. We will use this construction in the proof of Theorem \ref{thm:epsilon and phi after adding wt zero}.

\begin{definition}
Given ${\bf i}$, $a\in [n-1]$ and $\gamma\in \Gamma_a$ a rigorous path it is completely determined by the set of vertices of $\mathcal P_{\bf i}$ where $\gamma$ changes from one wire to another. These vertices are called \emph{turning points of $\gamma$} and the set of all turning points is denoted by $T_\gamma$.
\end{definition}

\begin{definition}\cite[Definition 4.11]{beaglebvolker}
    \label{def:rs}
Define maps $r,s: \Gamma_{a} \rightarrow \mathbb{Z}$ as follows: 

\begin{align*}
r(\gamma)_{p,q} &= \begin{cases}
\op{sgn}(q-p) & \text{if } v_{p,q} \in T_{\gamma} \\
0 & \text{otherwise}
\end{cases}\\
s(\gamma)_{p,q} &= \begin{cases}
1  & \text{  if } v_{p,q} \in \gamma, p \leq a < q \text{ or } q \leq a < p\\
-1 & \text{ if } v_{p,q} \in \gamma / T_{\gamma}, a < p,q \text{ or } p,q < a.\\
0 & \text{otherwise}
\end{cases}
\end{align*}
\end{definition}

\begin{theorem} \cite[Theorem 5.1]{beaglebvolker} [De Laporte--Genz--Koshevoy]
\label{thm:crystalstructure}
Let $\lambda \in X^{+}, x \in \mathcal{S}_{\mathbf{i}}(\lambda),$ and $a \in [n-1]$. Then the crystal structure in $\mathcal{S}_{\mathbf{i}}(\lambda)$ is given by 

\begin{align*}
\varepsilon_a(x) = \op{max}\left\{ \langle x, r(\gamma) \rangle | \gamma \in \Gamma_{a} \right\}\\
\op{wt}(x) = \lambda - \sum_{k \in [N]}x_{k} \alpha_{k}\\
f_{a}(x) = \begin{cases}
x + s(\gamma^x) & \text{if } \varphi_{a}(x) >0\\
0 & \text{otherwise}
\end{cases}\\
e_{a}(x) = \begin{cases}
x + s(\gamma_x) & \text{if } \varepsilon_{a}(x) >0\\
0 & \text{otherwise}
\end{cases}
\end{align*}

\noindent where $\gamma^{x} \in \Gamma_{a}$, respectively $\gamma_{x} \in \Gamma_{a}$, is the maximal, respectively minimal $\gamma \in \Gamma_{a}$, such that $\langle x , r(\gamma) \rangle = \varepsilon_{a}(x)$. 
\end{theorem}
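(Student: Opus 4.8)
The plan is to reduce, via Littelmann's piecewise-linear coordinate changes $\Phi^{\mathbf j}_{\mathbf i}$, to the case of a reduced word of $w_0$ whose first letter is $a$ --- where $\varepsilon_a$ and the crystal operators are pinned down directly by the adapted-string conditions --- and to control how rigorous paths and the maps $r,s$ of Definition~\ref{def:rs} transform along the elementary moves (commutations and braid relations) connecting any two reduced words of $w_0$.

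\textbf{Base case: $i_1=a$.} The defining conditions of an adapted string give $\varepsilon_a\big(f_{i_2}^{x_2}\cdots f_{i_N}^{x_N}(b_\lambda)\big)=0$, hence $\varepsilon_a(x)=x_1$, and $f_a$ (respectively $e_a$) shifts the first coordinate of $x$ by $+1$ (respectively $-1$) whenever the result is still in $\mathcal S_{\mathbf i}(\lambda)$, i.e.\ whenever $\varphi_a(x)>0$ (respectively $\varepsilon_a(x)>0$). To match this with the right-hand sides one shows that the maximum of $\langle x,r(\gamma)\rangle$ over $\gamma\in\Gamma_a$ equals $x_1$ --- the lower bound by exhibiting the short rigorous path through the leftmost crossing of $\mathbf i$, the upper bound by a direct argument on rigorous paths in a word beginning with $a$ --- and that among the maximizers the maximal path $\gamma^x$ has $s(\gamma^x)$ equal to the first standard basis vector while the minimal path $\gamma_x$ has $s(\gamma_x)$ equal to its negative. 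This is a finite, local verification on the wiring diagram.

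\textbf{Inductive step.} Let $\mathbf j$, with $j_1=a$, be reached from $\mathbf i$ by a shortest sequence of commutation and braid moves, and induct on its length. A commutation move $s_{i_k}s_{i_{k+1}}\leftrightarrow s_{i_{k+1}}s_{i_k}$ between non-adjacent letters is a local isotopy of the wiring diagram that merely transposes two coordinates; it commutes with $r$, with $s$, and with the operation of taking the maximal or minimal maximizer, so the formulas are preserved. The essential case is a single braid move $s_is_{i+1}s_i\leftrightarrow s_{i+1}s_is_{i+1}$, where $\Phi^{\mathbf j}_{\mathbf i}$ restricts to the piecewise-linear (tropical $A_2$) transformation on the three affected coordinates. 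One must exhibit the induced bijection on rigorous paths through the region of the wiring diagram affected by the move and check that it intertwines $r$ with the transpose, and $s$ with the inverse, of the appropriate linear piece of $\Phi^{\mathbf j}_{\mathbf i}$. Since $\Phi^{\mathbf j}_{\mathbf i}$ is only piecewise linear, identifying that piece is exactly the role of the maximal maximizer: the linear chamber of $\mathcal S_{\mathbf i}$ containing $x$ is determined by which rigorous paths maximize $\langle x,r(\cdot)\rangle$, so $\gamma^x$ pins it down, and $x+s(\gamma^x)$ is the image under that piece of the vector "add one to the first coordinate," i.e.\ of $f_a$ in the word $\mathbf j$; similarly for $\gamma_x$ and $e_a$.

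I expect this last step --- matching the tropical $A_2$ mutation with the local surgery on rigorous paths, and verifying that maximal maximizers go to maximal maximizers (and minimal to minimal) under the braid move --- to be the main obstacle; the accompanying checks on the weight function and on the inequalities cutting out the polytope are routine bookkeeping. Alternatively, one may take the Gleizer--Postnikov description of the string cone as given and deduce the formula for $\varepsilon_a$ from the word-independence of the tropical inequalities, but the identification of $f_a$ and $e_a$ still rests on the local analysis above.
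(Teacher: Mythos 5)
This statement is quoted from De Laporte--Genz--Koshevoy \cite[Theorem~5.1]{beaglebvolker}; the paper offers no proof of it, so there is nothing internal to compare your argument against --- you are in effect reproving an external result. Your overall strategy (settle the case $i_1=a$ directly from the adapted-string conditions, then transport the formulas along commutation and braid moves using Littelmann's transition maps $\Phi^{\mathbf j}_{\mathbf i}$) is the natural one and is essentially how the cone and crystal descriptions are established in the sources \cite{Gleizer-Postnikov,beaglebvolker}. As a proof, however, what you have written is a plan: the entire mathematical content --- the bijection between rigorous paths before and after a braid move, the verification that it intertwines $r$ with the correct linear piece of the tropical $A_2$ mutation, and the preservation of maximal and minimal maximizers --- is precisely the part you defer as ``the main obstacle,'' and none of it is carried out.

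There is also a concrete error in your base case. When $i_1=a$, the crossing recorded by the first letter is $v_{a,a+1}$ (the wires at levels $a$ and $a+1$ have not yet crossed anything else), so for every $\gamma\in\Gamma_a$ Definition~\ref{def:rs} assigns to that crossing the value $1$ if it lies on $\gamma$ and $0$ otherwise: the condition $p\le a<q$ holds there, and the $-1$ case is reserved for crossings with both wire indices on the same side of $a$. Hence no path can have $s(\gamma_x)$ equal to the negative of the first standard basis vector, while the direct computation from the adapted-string conditions gives $e_a(x)=(x_1-1,x_2,\dots,x_N)$. These two facts are incompatible with the displayed formula $e_a(x)=x+s(\gamma_x)$; together with the weight count $\operatorname{wt}(e_a(x))=\operatorname{wt}(x)+\alpha_a$ they indicate the formula should read $e_a(x)=x-s(\gamma_x)$. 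Before attempting the induction you need to reconcile the stated formula with Definition~\ref{def:rs}; as written, your base case cannot close.
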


\section{Embeddings for any reduced expression}

Let $\theta$ be the highest root. As a partition we have $\theta = (2,\underbrace{1,\dots, 1}_{n-2\hbox{ \small times}})$. For $1 \leq i <j \leq n-1$, define 

\begin{align*}
z_{n-1} :& = f_{n-1} f_{n-2} \cdots f_1 b_{\theta} \\
z_{n-2}:&= f_{n-2}f_{n-1}f_{n-3} \cdots f_1 b_{\theta} \\
z_{n-3}:&= f_{n-3}f_{n-2} f_{n-1} \cdots f_1 b_{\theta}\\
\vdots \\
z_{2}:&= f_2 f_3 \cdots f_{n-1} f_1 b_{\theta}\\
z_{1} :&= f_1 f_2 \cdots f_{n-1} b_{\theta}
\end{align*}

\begin{figure}
    \centering
\[\begin{tikzcd}
	&&& {v_{\theta}} \\
	&& \bullet && \bullet \\
	& \bullet && \bullet && \bullet \\
	\bullet && \bullet && \bullet && \bullet \\
	{z_4} && {z_3} && {z_2} && {z_1}
	\arrow["1"', from=1-4, to=2-3]
	\arrow["4", from=1-4, to=2-5]
	\arrow["2"', from=2-3, to=3-2]
	\arrow["4"', from=2-3, to=3-4]
	\arrow["1", from=2-5, to=3-4]
	\arrow["3", from=2-5, to=3-6]
	\arrow["3", from=3-2, to=4-1]
	\arrow["4", from=3-2, to=4-3]
	\arrow["2"', from=3-4, to=4-3]
	\arrow["3", from=3-4, to=4-5]
	\arrow["1", from=3-6, to=4-5]
	\arrow["2", from=3-6, to=4-7]
	\arrow["4", from=4-1, to=5-1]
	\arrow["3", from=4-3, to=5-3]
	\arrow["2", from=4-5, to=5-5]
	\arrow["1", from=4-7, to=5-7]
\end{tikzcd}\]
    \caption{Half of the associated crystal graph $\mathcal{B}(\theta)$ for $\mathfrak{sl}(5,\mathbb{C})$ and its adjoint representation with highest root $\theta = \omega_{1} + \omega_{4}$. Arrows $a \overset{i}{\longrightarrow} b$  mean $f_{i}(a) = b$. }
    \label{fig:adj rep n=5}
\end{figure}
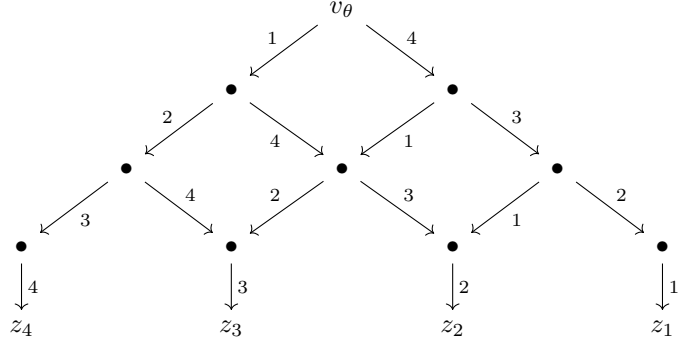


\begin{lemma}
\label{lem:puntoscruz}
Let $\mathcal{B}_{\mathbf{i}}(\theta)$ be the string polytope of highest weight $\theta$ associated to our fixed reduced expression $\mathbf{i}$ of the longest element of $S_n$. 
\begin{itemize}
\item[$i.$] Consider the finite set $\mathcal{B}_{\mathbf{i}}(\theta)_{0} \subset \mathcal{B}_{\mathbf{i}}(\theta)$ of weight zero elements.  We have \[\mathcal{B}_{\mathbf{i}}(\theta)_{0} =  \left \{ z_{1}, \cdots , z_{n-1} \right\}.\] 

\item[$ii.$] For each $1 \leq j \leq n-1$, the coordinates of $z_{n-j}$ in $\mathcal{S}_{\mathbf{i}}$ are given by greedily finding in $\mathbf{i}$ the word $n-j \cdots i_j$ (with $i_j = n-1$ if $j = n-1$, and $i_j = 1$ otherwise) as a subword (up to commutation), reading everything from left to right, placing $1$'s in these coordinates, and zeroes everywhere else. 

\item[$iii.$] Recall the notation from Subsection \ref{diagramnotation}, where we assign to each $v \in \mathcal{B}_{\mathbf{i}}(\theta)$ coordinates $v_{(i,j)}$ depending on the wiring diagram $\operatorname{wd}(\mathbf{i})$. Moreover, for $i<j$:

$$
{z_1}_{(i,j)} = \begin{cases}
1 \mbox{ } \text{if}  & \mbox{ } i = 1 \\
0 \mbox{ }  \mbox{ } & \mbox{ } otherwise.
\end{cases}$$

\noindent and 

$$
{z_{n-1}}_{(i,j)} = \begin{cases}
1 \mbox{ } \text{if}  & \mbox{ } j = n \\
0 \mbox{ }  \mbox{ } & \mbox{ } otherwise.
\end{cases}$$
\end{itemize}
\end{lemma}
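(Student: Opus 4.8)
The plan is to prove the three parts in sequence, with part (i) doing the heavy lifting and parts (ii), (iii) following as bookkeeping once the elements $z_j$ are identified explicitly.

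For part (i), I would first argue that $\mathcal{B}_{\mathbf{i}}(\theta)_0$ has exactly $n-1$ elements. The adjoint representation of $\mathfrak{sl}(n,\mathbb{C})$ has zero weight space of dimension $n-1$ (the Cartan subalgebra), so $|\mathcal{B}(\theta)_0| = n-1$; since $\mathcal{B}_{\mathbf{i}}(\theta)$ is a model for $\mathcal{B}(\theta)$ with weight given by $\operatorname{wt}(x) = \theta - \sum x_k \alpha_k$, the count transfers. Next I would verify the $z_j$ are pairwise distinct and all have weight zero. Each $z_{n-j}$ is obtained from $b_\theta$ by applying a sequence of lowering operators whose indices are exactly $1,2,\dots,n-1$ each once (in the order $n-j, n-j+1,\dots$ wrapping appropriately), so $\operatorname{wt}(z_{n-j}) = \theta - (\alpha_1 + \cdots + \alpha_{n-1}) = \theta - \theta = 0$. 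To see they lie in $\mathcal{B}(\theta)$ (i.e. none of the intermediate applications gives $0$) and are distinct, I would use the explicit structure of $\mathcal{B}(\theta)$: its crystal graph is a ``diamond'' as in Figure \ref{fig:adj rep n=5}, and one reads off directly that the $n-1$ weight-zero vertices are precisely the $n-1$ elements at the ``waist'' of the diamond, each reachable from $b_\theta$ by one of the $n-1$ monotone lattice paths listed. This is most cleanly done by an explicit description of $\mathcal{B}(\theta)$ (e.g. via columns/tableaux or via the known combinatorial model for the adjoint crystal) and then matching.

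For part (ii), once we know $z_{n-j} = f_{n-j} f_{n-j+1} \cdots f_{n-1} f_1 f_2 \cdots f_{j-1}\, b_\theta$ (reading the index sequence off the definition), I would compute its string coordinates directly from Definition \ref{adaptedstring}: the adapted string records, reading $\mathbf{i}$ from right to left, how many times each $f_{i_k}$ is applied after moving to the $\varepsilon$-maximal position. Because each operator index appears exactly once in the word producing $z_{n-j}$, the resulting string is a $0/1$ vector, and a coordinate $k$ carries a $1$ exactly when $i_k$ participates in the greedy (leftmost, up to commutation) realization of the subword $n-j,\,n-j+1,\dots$ inside $\mathbf{i}$. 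I would make the ``greedy subword'' claim precise by induction on $N$, peeling off $i_1$ and using the commutation/braid compatibility of the string parametrization (the maps $\Phi^{\mathbf j}_{\mathbf i}$) to reduce to the case where the needed letter is at the front.

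For part (iii), I would specialize (ii) to $j=1$ and $j=n-1$, or more directly use the rigorous-path/wiring-diagram picture: the element $z_1 = f_1 f_2 \cdots f_{n-1} b_\theta$ corresponds, under the Genz--Koshevoy--de Laporte dictionary, to the path that picks up exactly the crossings on wire $\ell_1$ (equivalently, all $v_{(1,j)}$), and dually $z_{n-1}$ picks up exactly the crossings involving wire $\ell_n$, i.e. all $v_{(i,n)}$; translating crossing-incidence into the coordinates $v_{(i,j)}$ gives the stated formulas. The main obstacle I anticipate is part (i): namely pinning down cleanly that the intermediate images $f_{i_{k+1}}\cdots b_\theta$ are nonzero and that the listed elements exhaust $\mathcal{B}(\theta)_0$ without just hand-waving at the picture — this requires either a careful case analysis in the adjoint crystal or invoking a clean combinatorial model for it, and everything downstream (ii) and (iii) is comparatively routine once (i) is secure.
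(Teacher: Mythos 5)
Your proposal is correct and follows essentially the same route as the paper: a dimension count for the zero weight space (so that exhibiting $n-1$ distinct weight-zero lattice points suffices for $i.$), the greedy-subword reading of the string coordinates for $ii.$, and the identification of those coordinates with the crossings along $\ell_1$ (resp.\ $\ell_n$) for $iii.$ The one difference worth noting is where the real work sits. You flag part $i.$ as the main obstacle, but the paper disposes of $i.$ and $ii.$ almost immediately (distinctness of the $z_j$ falls out of the subword description, and $ii.$ is taken as definitional); the only substantive argument in the paper is the induction proving $iii.$, namely that the first crossing at level $k+1$ occurring after $v_{1,j_k}$ must again lie on $\ell_1$ because $\ell_1$ meets no other wire in between. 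In your sketch this step is absorbed into the phrase ``under the Genz--Koshevoy--de Laporte dictionary, $z_1$ corresponds to the path that picks up exactly the crossings on wire $\ell_1$,'' which is precisely the claim that needs the short wiring-diagram induction rather than a citation; your proposed induction for $ii.$ (peeling off $i_1$ via the transition maps $\Phi^{\mathbf j}_{\mathbf i}$) would do the job but is heavier machinery than the paper uses. Neither point is a gap, just a redistribution of effort.
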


\begin{proof}
Note that $ii.$ follows by definition. To prove $i.$, it is enough to show that the elements $z_i$ and $z_j$ are distinct for $i \neq j$, and also that they satisfy the string inequalities. The first condition is automatically implied by the description given in $ii.$ Now we proceed to prove $iii.$ Let us consider \[z_{1} := f_1 f_2 \cdots f_{n-1} b_{\theta}.\]

Consider without loss of generality the wiring diagram $\operatorname{wd}(\mathbf{i})$. By definition, the coordinates of $z_1$ are given by placing a $``1"$ on the vertices of $\operatorname{wd}(\mathbf{i})$ corresponding to the first appearances of a crossing at level $k$, in order from left to right, for $1 \leq k \leq n-1$, and zeroes everywhere else. We claim that these vertices correspond precisely to the crossings of $\ell_1$ with $\ell_j$, for $2\leq j \leq n$. The first crossing at level $1$ clearly corresponds to the crossing of $\ell_1$ and $\ell_{j_1}$, for some $j_1$: if it were not, this would imply that there was a previous crossing at level $1$. We proceed by induction, and assume that the first crossings at level $k < n-1$ correspond to the crossing of $\ell_1$ and $\ell_{j_k}$, for some $j_k$. All of these $j_k$'s are distinct and depend on $\mathbf{i}$, however this follows from the fact that our expression $\mathbf{i}$ is reduced and has nothing to do with the induction hypothesis.  Consider now the crossing at level $k+1$, which is the first such crossing after $v_{1, j_k}$. Because it is the first such crossing, the induction hypothesis implies that $\ell_1$ cannot intersect any other psudoline between $v_{1, j_k}$ and our current crossing. This means that our crossing muust be of the form $v_{1, j_{k+1}}$, for some $j_{k+1}$. This completes the proof. 
\end{proof}

\begin{example}
\label{ex:adjoint}
Let $n= 5$, that is $\mathfrak{g} = \mathfrak{sl}(5,\mathbb{C})$. The highest root in this case is $\theta = \omega_{1} + \omega_{4}$, which is the highest weight of the adjoint representation. See Figure~\ref{fig:adj rep n=5}.

We have chosen to include only the tableaux corresponding to the highest weight vertex and the four weight zero elements $z_{1}, z_{2}, z_{3}, z_{4}$ (numbered as they appear on the picture from right to left). 
Indeed, from Figure~\ref{fig:wiring} we can read off that, for 
${\bf i}=(1,2,3,4,1,2,3,1,2,1)$ we have 
\begin{align*}
z_{1} &= (1,1,1,1,0,0,0,0,0,0) \\
z_{2} &= (0,1,1,1,1,0,0,0,0,0) \\
z_{3} &= (0,0,1,1,0,1,0,1,0,0) \\
z_{4} &= (0,0,0,1,0,0,1,0,1,1).
\end{align*}


\end{example}


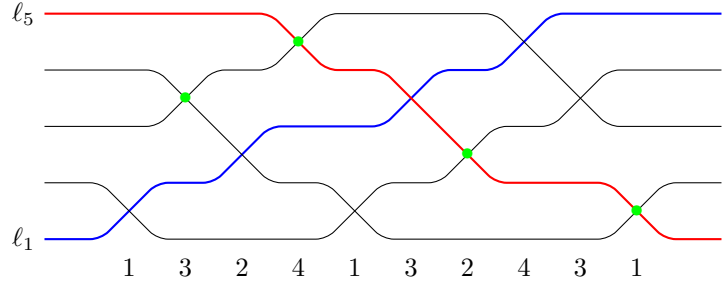
\begin{figure}
    \centering
\begin{tikzpicture}[scale=.75]

\node[left] at (-1,0) {$\ell_1$};
\node[left] at (-1,4) {$\ell_5$};

    \draw[rounded corners,thick, blue] (-1,0)--(0,0)--(1,1)--(2,1)--(3,2)--(5,2)--(6,3)--(7,3)--(8,4)--(10,4)--(11,4);
    \draw[rounded corners] (-1,1)--(0,1)--(1,0)--(4,0)--(5,1)--(6,1)--(7,2)--(8,2)--(9,3)--(10,3)--(11,3);
    \draw[rounded corners] (-1,2)--(1,2)--(2,3)--(3,3)--(4,4)--(7,4)--(9,2)--(10,2)--(11,2);
    \draw[rounded corners] (-1,3)--(1,3)--(3,1)--(4,1)--(5,0)--(9,0)--(10,1)--(11,1);
    \draw[rounded corners,thick,red] (-1,4)--(3,4)--(4,3)--(5,3)--(7,1)--(9,1)--(10,0)--(11,0);
    
    \node at (0.5,-.5){$1$};
    \node at (1.5,-.5){$3$};
    \node at (2.5,-.5){$2$};
    \node at (3.5,-.5){$4$};
    \node at (4.5,-.5){$1$};
    \node at (5.5,-.5){$3$};
    \node at (6.5,-.5){$2$};
    \node at (7.5,-.5){$4$};
    \node at (8.5,-.5){$3$};
    \node at (9.5,-.5){$1$};

    \node at (1.5,2.5) {\color{green} $\bullet$};
    \node at (3.5,3.5) {\color{green} $\bullet$};
    \node at (6.5,1.5) {\color{green} $\bullet$};
    \node at (9.5,.5) {\color{green} $\bullet$};
    
\end{tikzpicture}
    \caption{
    Notice how the subword of ${\bf i}=(1,3,2,4,1,3,2,4,3,1)$ corresponding to $z_1=f_1f_2f_3f_4.b_{\theta}$ (resp. $z_4=f_4f_3f_2f_1.b_{\theta}$) picks up all crossings along the wire $\ell_1$ (resp. $\ell_5$). This is not true for other $z_i$, $i\not\in\{1,4\}$, as for example, $z_3=f_3f_4f_2f_1.b_\theta$ is parametrized by the crossings marked by ${\color{green}\bullet}$ which are spread over different wires.
    }
    \label{fig:example z as paths}
\end{figure}

\begin{theorem}\label{thm:epsilon and phi after adding wt zero}
    Let  $x\in \mathcal S_{\mathbf{i}}\cap \mathbb Z^N$.
    
    \begin{enumerate}
    \item \label{1} If $\varepsilon_a(x)>0$ then $\varepsilon_a(x+z_i)>0$ for $a\in [n-1]$ and $i\in \{1,n-1\}$.
    \item \label{2} If $\varphi_a(x)>0$ then $\varphi_a(x+z_i)>0$ for $a\in [n-1]$ and $i\in \{1,n-1\}.$
    \end{enumerate}
\end{theorem}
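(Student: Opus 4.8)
\section*{Proof proposal}

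The plan is to reduce both parts to a single monotonicity statement for $\varepsilon_a$ and then to prove that statement from the path description of the crystal structure. Recall from Theorem~\ref{thm:crystalstructure} that for every lattice point $y$ one has $\varepsilon_a(y)=\max_{\gamma\in\Gamma_a}\langle y,r(\gamma)\rangle$, and that the crystal axiom $\varphi_a(y)-\varepsilon_a(y)=\langle\operatorname{wt}(y),\alpha_a^\vee\rangle$ holds. I will prove the \emph{Key Claim}: for $i\in\{1,n-1\}$, every $a\in[n-1]$ and every $\gamma\in\Gamma_a$ one has $\langle z_i,r(\gamma)\rangle\ge 0$. Granting this: for a given $x$ choose $\gamma_0\in\Gamma_a$ with $\langle x,r(\gamma_0)\rangle=\varepsilon_a(x)$; then $\varepsilon_a(x+z_i)\ge\langle x+z_i,r(\gamma_0)\rangle=\varepsilon_a(x)+\langle z_i,r(\gamma_0)\rangle\ge\varepsilon_a(x)$, which gives part \eqref{1} at once. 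For part \eqref{2}, regard $x$ as a lattice point of $\mathcal S_{\mathbf i}(\lambda)$ for some $\lambda$ with $x+z_i\in\mathcal S_{\mathbf i}(\lambda+\theta)$ (this is the embedding $\phi_j$); since $z_i$ has weight $0$ in $\mathcal B(\theta)$, i.e. $\sum_k(z_i)_k\alpha_k=\theta$, we get $\operatorname{wt}(x+z_i)=(\lambda+\theta)-\sum_k(x_k+(z_i)_k)\alpha_k=\operatorname{wt}(x)$, so $\varphi_a(x+z_i)=\varepsilon_a(x+z_i)+\langle\operatorname{wt}(x),\alpha_a^\vee\rangle\ge\varepsilon_a(x)+\langle\operatorname{wt}(x),\alpha_a^\vee\rangle=\varphi_a(x)>0$.

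It remains to prove the Key Claim, and it suffices to treat $i=1$: the Dynkin automorphism $a\mapsto n-a$ sends $\mathbf i$ to another reduced word for $w_0$, intertwines the two crystal structures, identifies $\Gamma_a$ with the corresponding $\Gamma_{n-a}$ via the top‑to‑bottom flip of $\operatorname{wd}(\mathbf i)$ that relabels $\ell_k\mapsto\ell_{n+1-k}$, and sends $z_1\mapsto z_{n-1}$; so the statement for $z_{n-1}$ and all reduced words follows from that for $z_1$ and all reduced words. For $z_1$: by Lemma~\ref{lem:puntoscruz}$(iii)$, in wiring‑diagram coordinates $z_1$ is the indicator vector of the crossings lying on the wire $\ell_1$, while $r(\gamma)$ is supported on the turning points $v_{p,q}\in T_\gamma$ with value $\operatorname{sgn}(q-p)$. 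With the indexing convention of \cite{beaglebvolker} (a turn from $\ell_p$ onto $\ell_q$ is recorded as $v_{p,q}$, consistent with the turning points $v_{3,1},v_{1,4}$ in Figure~\ref{fig:wiring}), a turning point where $\gamma$ \emph{leaves} $\ell_1$ contributes $+1$ and one where $\gamma$ \emph{enters} $\ell_1$ contributes $-1$, since $\ell_1$ has the smallest index among all wires. Now $\ell_1$ is oriented left‑to‑right in \emph{every} $\Gamma_a$ (as $a\ge 1$), so each maximal sub‑path of $\gamma$ on $\ell_1$ is traversed rightward, beginning with an ``enter'' turn and ending with a ``leave'' turn, with the only exceptions being the first sub‑path when $\gamma$ begins on $\ell_1$ and the last one when $\gamma$ ends on $\ell_1$. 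Summing over the sub‑paths, the contributions telescope:
\[
\langle z_1,r(\gamma)\rangle \;=\; \#\{\text{turns leaving }\ell_1\}-\#\{\text{turns entering }\ell_1\}\;=\;
\begin{cases}
\phantom{-}1 & \text{if }\gamma\text{ begins on }\ell_1\text{ but does not end on }\ell_1,\\
-1 & \text{if }\gamma\text{ ends on }\ell_1\text{ but does not begin on }\ell_1,\\
\phantom{-}0 & \text{otherwise.}
\end{cases}
\]

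Thus the Key Claim for $z_1$ amounts to excluding the middle case, i.e. showing that no left GP‑path in $\Gamma_a$ terminates on $\ell_1$ without originating on $\ell_1$; dually, for $z_{n-1}$ one must exclude a path that originates on $\ell_n$ without terminating there, and the identical computation then gives $\langle z_{n-1},r(\gamma)\rangle\in\{0,1\}$. This is the main obstacle: the local rigorousness condition does \emph{not} by itself forbid running straight along $\ell_1$ all the way to its right‑hand endpoint, so one needs the global structure of Gleizer--Postnikov paths. Concretely, I would establish (from \cite{Gleizer-Postnikov,beaglebvolker}, by tracing the rigorousness constraints near the boundary) that a left GP‑path in $\Gamma_a$ runs from the left endpoint of one of the rightward wires $\ell_1,\dots,\ell_a$ to the left endpoint of one of the leftward wires $\ell_{a+1},\dots,\ell_n$ — in particular $\ell_1$ can only be a source of $\gamma$, never a sink, and $\ell_n$ only a sink, never a source — which closes both cases. (It would in fact suffice to verify this only for the maximal paths $\gamma^x$ of Theorem~\ref{thm:crystalstructure}.) The one remaining point is the bookkeeping of the orientation‑dependent indexing $v_{p,q}$ so that the signs come out as above; this is routine once the conventions of \cite{beaglebvolker} are in force.
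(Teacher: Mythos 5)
Your proposal is correct and follows essentially the same route as the paper: both use the formula $\varepsilon_a(x)=\max_{\gamma\in\Gamma_a}\langle x,r(\gamma)\rangle$ from Theorem~\ref{thm:crystalstructure} together with Lemma~\ref{lem:puntoscruz}$(iii)$ to evaluate $\langle z_i,r(\gamma)\rangle$ as a signed count of turning points on the boundary wires $\ell_1$ and $\ell_n$, and both deduce part~(\ref{2}) from part~(\ref{1}) via $\varphi_a-\varepsilon_a=\langle\operatorname{wt}(\cdot),\alpha_a^\vee\rangle$ and the fact that adding $z_i$ preserves the weight. The one step you flag as the ``main obstacle'' --- that $\ell_1$ can only be a source and $\ell_n$ only a sink of a path in $\Gamma_a$ --- does not need to be derived from the rigorousness condition: in \cite{Gleizer-Postnikov,beaglebvolker} a path in $\Gamma_a$ runs by definition from the left endpoint of $\ell_a$ to the left endpoint of $\ell_{a+1}$ (cf.\ the blue path in Figure~\ref{fig:wiring}), which yields your trichotomy at once and is exactly the fact the paper's own case analysis uses implicitly.
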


\begin{proof}
    For (\ref{1}) we distinguish three cases and make repeated use of Lemma~\ref{lem:puntoscruz}.
    \begin{itemize}
    \item[$a=1=i$] Consider the set of turning points $T_\gamma$. We find some $v_{1,k}\in T_{\gamma}$ for some $k>1$ and no other turning point along $\ell_1$. Hence, $\langle r(\gamma),z_i\rangle=1$ so that
    \[
    \varepsilon_a(x+z_i)=\max \{\langle x,r(\gamma)\rangle +1\}_{\gamma\in \Gamma_a} =\varepsilon_a(x)+1.
    \]
    \item[$a=n-1=i$] Similar as in the first case, each $T_\gamma$ contains a vertex $v_{k,n}$ for some $k<n$ and no other turning point along $\ell_n$. The rest follows as above.
    \item[$a\not=i$] We claim that $\langle r(\gamma),z_i\rangle =0$ for all $\gamma\in\Gamma_a$. Each path $\gamma$ has two possibilities: either it travels along $l_i$, here $i=1,n$, (not only intersect it, but contains a segment of $\ell_i$) at some point or it does not. In the latter case no $v_{i,k}$ (or $v_{k,i}$) is in $T_\gamma$, so the claim holds.
    In the former case there is some $v_{k,i}$ (respectively $v_{i,k}$) in $T_{\gamma}$ but as $a\not=i$ there is also another vertex of form $v_{i,k'}$ (respectively $v_{k',i}$) in $T_{\gamma}$. As $i\in \{1,n\}$ the two vertices contribute to $r(\gamma)$ with opposite signs, hence the claim follows. In particular,
    \[
    \varepsilon_a(x)=\varepsilon_a(x+z_i).
    \]
    \end{itemize}
    To show (\ref{2}) it is enough to note that 
$\varphi_{i}(x) = \langle \operatorname{wt}(x), \alpha_{i} \rangle + \varepsilon_{i}(x).$ 
\end{proof}

\begin{remark}
    It is possible to write a proof of Theorem \ref{thm:epsilon and phi after adding wt zero}
using only the basic definition of the crystal structure on $\mathcal{S}_{\mathbf{i}}(\lambda)$ given in \ref{sec:crystalstructure}, without Definition \ref{thm:crystalstructure} from \cite{beaglebvolker}. We thank Bea De Laporte for this observation. However we found using her joint results from \cite{beaglebvolker} more illustrative. 
\end{remark}

\begin{corollary}
\label{thm:crystal1}
The map $x \mapsto x + z_{i}$, for $i \in \{1, n-1 \}$ induces an injective crystal morphism

\[\mathcal{S}_{\bf i}(\lambda) \overset{\psi^{n}_{i}}{\longrightarrow} \mathcal{S}_{\bf i}(\lambda + \theta).\]
\end{corollary}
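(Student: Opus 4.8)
\textbf{Proof plan for Corollary~\ref{thm:crystal1}.}

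The plan is to check that the set-theoretic map $\psi^n_i\colon x\mapsto x+z_i$ is well-defined, injective, and compatible with all the crystal data $(\operatorname{wt},\varepsilon_a,\varphi_a,e_a,f_a)$, using Theorem~\ref{thm:epsilon and phi after adding wt zero} together with the explicit description of the crystal structure in Theorem~\ref{thm:crystalstructure}. First I would verify well-definedness: if $x\in\mathcal S_{\mathbf i}(\lambda)\cap\mathbb Z^N$ then $x+z_i\in\mathcal S_{\mathbf i}(\lambda+\theta)\cap\mathbb Z^N$. Since $z_i\in\mathcal S_{\mathbf i}(\theta)$ and the string cone is a semigroup, $x+z_i\in\mathcal S_{\mathbf i}$; the bounding inequalities $\langle x+z_i, r(\gamma')\rangle\le \lambda_a+\theta_a$ for $\gamma'\in\Gamma_a$ follow because $\langle x,r(\gamma')\rangle\le\lambda_a$ and $\langle z_i,r(\gamma')\rangle\le\theta_a$ (the latter because $z_i$ itself lies in $\mathcal S_{\mathbf i}(\theta)$). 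Injectivity is immediate since translation by a fixed vector is injective.

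Next I would check the weight: by Theorem~\ref{thm:crystalstructure}, $\operatorname{wt}(x+z_i)=(\lambda+\theta)-\sum_k(x_k+{z_i}_k)\alpha_k=\operatorname{wt}(x)+\operatorname{wt}(z_i)=\operatorname{wt}(x)$, since $z_i$ has weight zero by Lemma~\ref{lem:puntoscruz}(i). For the operators, fix $a\in[n-1]$. The key computation, already carried out inside the proof of Theorem~\ref{thm:epsilon and phi after adding wt zero}, is that $\langle x+z_i,r(\gamma)\rangle=\langle x,r(\gamma)\rangle+c$ for all $\gamma\in\Gamma_a$, where $c=1$ if $a=i\in\{1,n-1\}$ and $c=0$ otherwise; in either case $c$ is a constant independent of $\gamma$. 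Hence $\varepsilon_a(x+z_i)=\max_{\gamma\in\Gamma_a}\langle x+z_i,r(\gamma)\rangle=\varepsilon_a(x)+c$, and crucially the maximizing path $\gamma^{x}$ (and minimizing path $\gamma_x$) is \emph{the same} for $x$ and for $x+z_i$. Therefore $f_a(x+z_i)=(x+z_i)+s(\gamma^{x+z_i})=(x+s(\gamma^x))+z_i=f_a(x)+z_i=\psi^n_i(f_a(x))$ whenever $\varphi_a(x)>0$, using Theorem~\ref{thm:epsilon and phi after adding wt zero}(2) to ensure $\varphi_a(x+z_i)>0$ so the operator is genuinely applied on both sides; the boundary case $\varphi_a(x)=0$ is handled by noting that then $\varepsilon_a(x)=-\langle\operatorname{wt}(x),\alpha_a^\vee\rangle$ and $\varphi_a(x+z_i)=0$ as well since weights and $\varepsilon$-values differ by the same constant, so both sides give $0$. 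The argument for $e_a$ is symmetric, using $\gamma_x$ in place of $\gamma^x$ and Theorem~\ref{thm:epsilon and phi after adding wt zero}(1). Finally $\varphi_a(x+z_i)=\langle\operatorname{wt}(x+z_i),\alpha_a^\vee\rangle+\varepsilon_a(x+z_i)=\langle\operatorname{wt}(x),\alpha_a^\vee\rangle+\varepsilon_a(x)+c=\varphi_a(x)+c$ follows from axiom (4).

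The main obstacle is not any single computation but rather being careful about the convention for ``crystal morphism'': one must decide whether $\psi^n_i$ is required to commute with $e_a,f_a$ strictly (so that $f_a\circ\psi^n_i=\psi^n_i\circ f_a$ as maps to $B\sqcup\{0\}$) or only in the weaker sense that it intertwines them where defined, and correspondingly whether it must preserve $\varepsilon_a,\varphi_a$ exactly. Since by the computation above $\varepsilon_a$ and $\varphi_a$ jump by $1$ precisely when $a=i$, the map $\psi^n_i$ is \emph{not} a strict crystal morphism in the strongest sense; the correct statement (matching the phrasing in the introduction, ``commute with the crystal operators whenever they are defined in the domain'') is that $\psi^n_i$ is an injection commuting with $e_a$ and $f_a$ wherever the latter are nonzero on the domain, and preserving $\operatorname{wt}$. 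I would therefore state precisely which notion is meant, prove exactly that, and remark that the shift in $\varepsilon_i,\varphi_i$ is the expected one for an embedding $\mathcal B(\lambda)\hookrightarrow\mathcal B(\lambda+\theta)$ sending $b_\lambda$ into the appropriate weight-zero region. The remaining steps are then the routine verifications sketched above.
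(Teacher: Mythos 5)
Your proposal is correct and follows essentially the same route as the paper: well-definedness via the semigroup property of the string cone, and compatibility with the crystal structure via Theorem~\ref{thm:epsilon and phi after adding wt zero}. In fact you supply more detail than the paper's two-line proof does --- in particular the observation that $\langle z_i,r(\gamma)\rangle$ is \emph{constant} over $\gamma\in\Gamma_a$ (equal to $1$ if $a=i$ and $0$ otherwise), so that the maximizing and minimizing paths $\gamma^x,\gamma_x$ are unchanged by the translation and hence $f_a,e_a$ genuinely commute with $x\mapsto x+z_i$; this is implicit in the proof of Theorem~\ref{thm:epsilon and phi after adding wt zero} but is the step actually needed to get a morphism, and you are right to make it explicit, as you are to check the upper-bound inequalities $\langle x+z_i,r(\gamma')\rangle\le\lambda_a+\theta_a$, which the paper omits. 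One small internal inconsistency: your mid-proof claim that when $\varphi_a(x)=0$ one also has $\varphi_a(x+z_i)=0$ "since weights and $\varepsilon$-values differ by the same constant" is false for $a=i$, where $\operatorname{wt}$ is preserved but $\varepsilon_i$ and $\varphi_i$ each jump by $1$, so $f_i(x)=0$ while $f_i(x+z_i)\neq 0$. Your final paragraph already diagnoses this correctly --- the map is only a morphism in the weak sense of intertwining $e_a,f_a$ where they are nonzero on the domain, matching the introduction's phrasing --- so the fix is simply to delete the erroneous boundary-case sentence and let that weaker (and correct) formulation stand.
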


\begin{proof}
Note that if $x \in \mathcal{S}_{\mathbf{i}} \cap \mathbb{Z}^{N}$, then since $\mathcal{S}_{\mathbf{i}}$ is a convex cone, then $x+z_i \in \mathcal{S}_{i} \cap \mathbb{Z}^{N}$ as well. The rest follows from Theorem \ref{thm:epsilon and phi after adding wt zero}.
\end{proof}

We define two restriction maps from $\mathfrak{sl}_n$ to $\mathfrak{sl}_{n-1}$
that induce projections of the corresponding string cones. 
Combinatorially it may be described by \emph{removing} the first respectively the $n^{\text{th}}$ wire, denoted by 
\[
\text{pr}_1:\mathbb Z^N\to \mathbb  Z^{N-n+1}, \quad \text{resp.} \quad \text{pr}_n:\mathbb Z^N\to \mathbb  Z^{N-n+1}
\]
For example, fixing ${\bf i}=(1,2,\cdots,{n-1},1,\cdots,{n-2},\cdots, 1,2,1)$
 the word corresponding to $\text{pr}_1$ is  $\text{pr}_1({\bf i})=(1,2,\cdots,{n-2},1,\cdots ,{n-3},\cdots ,1,2,1)$ while the word corresponding to $\text{pr}_n$ is $\text{pr}_n({\bf i})=(2,\cdots, {n-1},2,\cdots, {n-3},\cdots ,2,3,2)$.
Hence, at the level of string cones we obtain 
\begin{eqnarray*}
\text{pr}_1(x_1,\dots,x_N)&=&(x_n,\dots,x_N), \quad \text{resp.}\\
\text{pr}_n(x_1,\dots,x_N)&=&(x_1,\dots,x_{n-2},x_{n},\dots,x_{2n-3},\dots,x_{N-2})
\end{eqnarray*}
Notice that we have precisely $\text{pr}_1(z_1)=0=\text{pr}_n(z_{n-1})$.
To distinguish the reference Lie algebra in the following result we denote $\varepsilon_a=\varepsilon_a^n$ and $\varphi_a=\varphi_a^n$ to emphazise that we are working with $\mathfrak{sl}_n$.


\begin{corollary}
\label{cor:proj}
    Fix a reduced expression ${\bf i}$ and $x\in \mathcal S_{\bf i}\cap \mathbb Z^N$. 
    Then for every $1\le i\le n-1$ there exists a sequence of projections $\textbf{pr}:=\text{pr}_{j_k}\circ\dots \circ \text{pr}_{j_1}$ with $j_i\in \{1,n-i\}$ such that
    \begin{enumerate}
        \item If $\varepsilon_a^{n-k}(\textbf{pr}(x))>0$ then $\varepsilon^{n-k}_a(\textbf{pr}(x+z_i))>0$, and
        \item If $\varphi^{n-k}_a(\textbf{pr}(x))>0$ then $\varphi^{n-k}_a(\textbf{pr}(x+z_i))>0$.
    \end{enumerate}
\end{corollary}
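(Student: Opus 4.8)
The idea is to reduce \Cref{cor:proj} to \Cref{thm:epsilon and phi after adding wt zero} by an inductive dismantling of $\mathfrak{sl}_n$ down to $\mathfrak{sl}_{n-k}$, choosing at each step which wire to remove so that the weight-zero vector $z_i$ is carried to the weight-zero vector $z_1$ or $z_{n-k}$ of the smaller string polytope. Concretely, for fixed $i$ the plan is to build the sequence $\textbf{pr}=\text{pr}_{j_k}\circ\dots\circ\text{pr}_{j_1}$ greedily: if $i=1$ set $j_1=1$; if $i=n-1$ set $j_1=n-1$ ($=n-i$ in that case); and otherwise remove, say, the first wire, $j_1=1$, which lowers the rank by one and shifts the relevant index of $z_i$ down by one. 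Iterating, after $k=i-1$ steps the image of $z_i$ under $\textbf{pr}$ becomes the vector $z_1$ of the string polytope $\mathcal S_{\text{pr}(\mathbf i)}$ for $\mathfrak{sl}_{n-k}$, at which point \Cref{thm:epsilon and phi after adding wt zero}(1)-(2) applies directly and gives the conclusion.

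\textbf{Key steps, in order.} First I would record the effect of $\text{pr}_1$ and $\text{pr}_n$ on the distinguished vectors: using \Cref{lem:puntoscruz}(ii)-(iii), removing the first wire sends $z_j^{(n)}\mapsto z_{j-1}^{(n-1)}$ (with $z_1^{(n)}\mapsto 0$) and removing the $n$-th wire sends $z_j^{(n)}\mapsto z_j^{(n-1)}$ (with $z_{n-1}^{(n)}\mapsto 0$); the point is that deleting wire $\ell_1$ deletes exactly the crossing $v_{1,k_1}$ that carries the leading $1$ in the greedy subword for $z_j$, so what remains is the greedy subword for $z_{j-1}$ in the smaller diagram, and symmetrically for $\ell_n$. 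Second, I would establish that $\text{pr}_1$ and $\text{pr}_n$ commute with the crystal data in the relevant direction — i.e. that $\varepsilon_a^{n-1}$ and $\varphi_a^{n-1}$ of the projected point are controlled by those of the original — which is essentially the statement that removing a wire is a map of string cones compatible with the $\Gamma_a$ description of \Cref{thm:crystalstructure}: a rigorous path in $\operatorname{wd}(\text{pr}_1(\mathbf i))$ lifts to one in $\operatorname{wd}(\mathbf i)$ (pad with zeros on the deleted coordinates), and conversely the pairing $\langle\text{pr}_1(x),r(\gamma)\rangle$ agrees with $\langle x,r(\tilde\gamma)\rangle$ for the lift $\tilde\gamma$. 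Third, I would run the induction on $k$: apply \Cref{thm:epsilon and phi after adding wt zero} when the image of $z_i$ has become $z_1$ or $z_{n-k}$, and otherwise peel off one more projection, using that $x\in\mathcal S_{\mathbf i}$ convex implies $\text{pr}_j(x)\in\mathcal S_{\text{pr}_j(\mathbf i)}$ so the hypotheses of the next stage are met.

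\textbf{Main obstacle.} The delicate point is Step 2: one must be careful about the \emph{direction} of the inequalities. The projection $\text{pr}_1$ does not literally intertwine the $\mathfrak{sl}_n$ and $\mathfrak{sl}_{n-1}$ crystal operators — the $\mathfrak{sl}_{n-1}$ crystal on $\mathcal S_{\text{pr}_1(\mathbf i)}(\operatorname{pr}_1\lambda)$ is a genuinely smaller object — so I cannot simply say "$\varepsilon_a$ is preserved". What is true, and all that is needed, is the one-sided statement that $\varepsilon_a^{n-1}(\text{pr}_1(y))$ depends only on those rigorous paths in $\Gamma_a^{(n)}$ that avoid the wire $\ell_1$, and on such paths $\langle y, r(\gamma)\rangle=\langle\text{pr}_1(y),r(\bar\gamma)\rangle$; combined with the case $a\neq i$ of \Cref{thm:epsilon and phi after adding wt zero} (where the two turning points on $\ell_i$ cancel), this shows adding $z_i$ does not disturb the relevant maxima after projection. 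Verifying this path-restriction claim carefully — in particular that the maximizing path for $\varepsilon_a^{n-1}$ really can be taken to avoid the deleted wire, which is where the hypothesis $\varepsilon_a^{n-k}(\textbf{pr}(x))>0$ enters — is the technical heart of the argument; once it is in place the rest is the bookkeeping of \Cref{lem:puntoscruz} described above, and the $\varphi$ statement follows from the $\varepsilon$ statement exactly as in the proof of \Cref{thm:epsilon and phi after adding wt zero} via $\varphi_a=\langle\operatorname{wt},\alpha_a\rangle+\varepsilon_a$.
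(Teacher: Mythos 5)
Your proposal follows essentially the same route as the paper's (two-sentence) proof: compose projections until the image of $z_i$ becomes the extreme weight-zero vector $z^{n-k}_1$ or $z^{n-k}_{n-k-1}$ of the lower-rank string polytope, and then apply Theorem~\ref{thm:epsilon and phi after adding wt zero} at that rank; your greedy-subword bookkeeping for how $\operatorname{pr}_1$ and $\operatorname{pr}_n$ act on the $z_j$ just makes explicit what the paper reads off from the crystal graph of $V(\theta)$. The one correction worth making is that the ``main obstacle'' you single out is not actually needed: since $\textbf{pr}$ is linear, $\textbf{pr}(x+z_i)=\textbf{pr}(x)+\textbf{pr}(z_i)$ and the asserted implications are entirely intrinsic to rank $n-k$ (both $\varepsilon^{n-k}_a(\textbf{pr}(x))$ and $\varepsilon^{n-k}_a(\textbf{pr}(x)+\textbf{pr}(z_i))$ are computed from rigorous paths in $\operatorname{wd}(\textbf{pr}(\mathbf{i}))$ alone), so no lifting of paths back to rank $n$ or comparison of $\varepsilon^{n}_a$ with $\varepsilon^{n-1}_a$ is required --- the only cross-rank fact you need is that $\textbf{pr}(x)$ lies in $\mathcal S_{\textbf{pr}(\mathbf{i})}$, which holds because the GP-path inequalities of the sub-diagram are among those of the full diagram (not because $\mathcal S_{\mathbf{i}}$ is convex).
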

\begin{proof}
    From the crystal graph of $V(\theta)$ for $\mathfrak{sl}_n$ we see that after sufficiently many, say $k$ many, projections $z_i=z_i^n$ has image $z^{n-k}_1$ or $z^{n-k}_n$. Now Theorem~\ref{thm:epsilon and phi after adding wt zero} applies. 
\end{proof}

\section{Outlook: Atomic decompositions}

In \cite{Pat}, a geometrical approach to the problem of computing $q-$analogues of weight multiplicities $K_{\lambda,\mu}(q)$, also known as Kostka--Foulkes polynomials, was proposed. This approach reduces the problem to finding so-called \textit{atomic decompositions} of crystals, together with compatible \textit{swapping functions}. In \cite{PT23}, this approach was carried out in type $C_2$. The atomic decompositions obtained in \cite{PT23} were obtained using certain embeddings of crystals in terms of string polytopes, which, to the authors' knowledge, had not been considered before. Such embeddings are the main motivation for this work. In fact, even in type $A_{n-1}$, only one atomic decomposition of crystals is known \cite{LL21,Pat}. In Conjecture \ref{conj:atoms}, which is formulated as the result of numerous computer calculations using SageMath \cite{sagemath}, we conjecture a way to systematically obtain $n$ distinct atomic decompositions of the type $A_n$ crystals $\mathcal{B}(k\theta)$, where we recall that $\theta$ is the highest root.  

\begin{definition} Given a dominant integral weight $\lambda$, consider the multiset of weights $\op{wt}(b)$ for $b \in \mathcal{B}(\lambda)$ and denote it by $\mathcal{N}(\lambda)$. Additionally, let $N(\lambda)$ the associated set, i.e. the set obtained from $\mathcal{N}(\lambda)$ by removing multiple occurrences of a given weight. More generally, for a subset $\mathcal{A} \subset \mathcal{B}(\lambda)$, we denote by $\mathcal{N}(\mathcal{A})$ the sorresponding multiset of weights $\op{wt}(a)$ for $a \in \mathcal{A}$.
\end{definition}

\begin{definition}
An \textit{atom} in $\mathcal{B} (\lambda)$  is a subset $\mathcal{A} \subset \mathcal{B} (\lambda)$ such that the multiset $\mathcal{N}(\mathcal{A})$ coincides with the set $N(\mu)$ for some dominant integral weight $\mu \leq \lambda$. We say that the atom is a \textit{big atom} if $\mu = \lambda$.
\end{definition}
                  
\begin{conjecture}  
\label{conj:atoms}
Let $i \in [1,n-1]$. Then the subset 

\[\mathcal{A}_{i} := (\bigcup_{j \neq i}\text{im}(\psi^{n}_{j}))^{c}\]

\noindent is a big atom in $\mathcal{B}(k\theta)$.
\end{conjecture}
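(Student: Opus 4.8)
\textbf{Proof plan for Conjecture \ref{conj:atoms}.}

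The plan is to establish a cardinality count together with a weight-multiplicity count, and to leverage the well-understood structure of $\mathcal{B}(\theta)$ and its weight zero space. First I would reduce the statement to two claims: (a) the multiset $\mathcal N(\mathcal A_i)$ equals the set $N(k\theta)$ as a \emph{set}, i.e. every weight of $\mathcal{B}(k\theta)$ occurs exactly once in $\mathcal A_i$; and (b) $\mathcal A_i$ is nonempty and $\mu = k\theta$, i.e. the highest weight vertex itself lies in $\mathcal A_i$. For (b), observe that $b_{k\theta}$ is the unique lattice point of maximal weight in $\mathcal S_{\mathbf i}(k\theta)$, namely $x=0$ shifted appropriately; I would check that $0$ is not in the image of $\psi^n_j$ for any $j$ (since $\psi^n_j(x) = x + z_j$ and $z_j \neq 0$, so every image point has a strictly positive coordinate and hence cannot be the bottom vertex of $\mathcal S_{\mathbf i}(k\theta)$, which corresponds to the all-zero string). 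Thus $b_{k\theta}\in \mathcal A_i$ and, if (a) holds, automatically $\mu = k\theta$, so $\mathcal A_i$ is a big atom.

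The heart of the argument is (a). Here the plan is to use a telescoping/inclusion–exclusion structure on the $n-1$ embeddings $\psi^n_1,\dots,\psi^n_{n-1}: \mathcal S_{\mathbf i}(k\theta)\hookrightarrow \mathcal S_{\mathbf i}((k+1)\theta)$ — but note these are defined for $i\in\{1,n-1\}$ as crystal morphisms via Corollary \ref{thm:crystal1}, whereas $\psi^n_j$ for intermediate $j$ must first be defined (as $x\mapsto x+z_j$, which is still a well-defined injection of lattice points of string polytopes by convexity of $\mathcal S_{\mathbf i}$, even if not a crystal morphism). The key combinatorial input should be: the $z_j$, $j\in[1,n-1]$, together with $0$, are precisely the weight-zero elements of $\mathcal B(\theta)$ (Lemma \ref{lem:puntoscruz}(i)), and more importantly the $n$ cosets/translates they generate tile $\mathcal{B}((k+1)\theta)$ over $\mathcal{B}(k\theta)$ in a weight-compatible way. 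Concretely I would try to show that for each weight $\nu \le (k+1)\theta$, the number of $j\in\{1,\dots,n-1,\infty\}$ (where $\infty$ denotes the "no shift" copy, i.e. the complement) for which $\nu$ is hit equals exactly the multiplicity $\dim \mathcal B((k+1)\theta)_\nu - [\text{correction}]$; summing over all copies and using $\dim \mathcal B(\theta)_0 = n-1$ (the number of positive roots that are... rather, the dimension of the zero weight space of the adjoint representation of $\mathfrak{sl}_n$, which is $n-1$) forces the complement $\mathcal A_i$ to carry exactly the "new" weights, each once. A cleaner route: argue by induction on $k$, using that $\mathcal B((k+1)\theta) \cong \bigsqcup$ (as weight-multiset) of the $n$ shifted copies of $\mathcal B(k\theta)$ plus one copy of something, and peel off the atom; the base case $k=0$ says $\mathcal A_i = \{b_0\}$ carries weight $0$ once, which is $N(0)$.

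The main obstacle I anticipate is precisely proving that the $n$ translates $\{\,\mathrm{im}(\psi^n_j)\,\}_{j} \cup \{\text{complement}\}$ fit together to reproduce $\mathcal{N}((k+1)\theta)$ with the right multiplicities — i.e. that the images are pairwise "almost disjoint" and jointly "almost cover", with the discrepancy being exactly one copy of $N(k\theta)$ per weight level. Disjointness of images is subtle: $x + z_i = y + z_j$ with $x,y$ both valid strings can happen, so one needs the string-polytope inequalities (the theorem of Gleizer–Postnikov / de Laporte–Genz–Koshevoy) to control overlaps, presumably showing overlaps occur only on lower-dimensional faces that don't contribute new weights. I would handle this by working in coordinates via Lemma \ref{lem:puntoscruz}(ii)–(iii), where each $z_j$ is an explicit $0/1$ vector, reducing the overlap analysis to a concrete statement about which $0/1$ shifts of a lattice polytope can coincide. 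The weight bookkeeping itself (everything has $\mathrm{wt}(z_j)=0$, so $\mathrm{wt}(x+z_j) = \mathrm{wt}(x) - \theta$ relative to the new highest weight) is routine once the set-theoretic tiling is in hand; I would leave those verifications to a lemma and focus the write-up on the tiling.
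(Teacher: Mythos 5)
The statement you are trying to prove is stated in the paper as a \emph{conjecture}, supported only by SageMath experiments and two worked examples (the adjoint crystal for $\mathfrak{sl}_5$ and the rank-two case $\mathfrak{sl}_3$); the paper offers no proof, so there is nothing to compare your argument against except the examples. Judged on its own terms, your proposal is a plan rather than a proof, and the gap you yourself flag is exactly the content of the conjecture: you must show that the images $\operatorname{im}(\psi^n_j)$, $j\neq i$, overlap in a controlled way and that their complement in $\mathcal{S}_{\mathbf i}(k\theta)$ contains each weight $\nu$ with multiplicity exactly one. Writing ``I would try to show'' for this step leaves the entire difficulty unresolved; the reductions you do carry out (the highest weight vertex $0$ lies in $\mathcal{A}_i$ since each $z_j\neq 0$ has nonnegative entries, and the weight bookkeeping $\operatorname{wt}(x+z_j)=\operatorname{wt}(x)$) are the easy parts.

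Beyond the missing core, two structural points in your plan are off. First, Lemma \ref{lem:puntoscruz}(i) says $\mathcal{B}_{\mathbf i}(\theta)_0=\{z_1,\dots,z_{n-1}\}$: these $n-1$ elements already exhaust the weight-zero space of the adjoint crystal (its dimension is $n-1$), and the string $0$ is the \emph{highest weight} vertex $b_\theta$, of weight $\theta$, not a weight-zero element. So your ``$n$ translates indexed by $\{1,\dots,n-1,\infty\}$'' bookkeeping starts from a miscount. Second, $\mathcal{A}_i$ is the complement of the union over $j\neq i$ only, i.e.\ of $n-2$ images, so $\operatorname{im}(\psi^n_i)$ is (partially) \emph{retained} inside $\mathcal{A}_i$; your telescoping/inclusion--exclusion scheme, which removes all shifted copies and keeps only the ``new'' elements, does not describe the set in the conjecture and would in particular erase the dependence on $i$ that is supposed to produce $n-1$ genuinely different atomic decompositions. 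A correct attack would at minimum need (a) the character identity equating the number of distinct weights of $\mathcal{B}(k\theta)$ with $|\mathcal{B}(k\theta)|-(n-2)|\mathcal{B}((k-1)\theta)|$ plus overlap corrections (it checks out for $k=1$, but you never verify it in general), and (b) an analysis, in the explicit $0/1$ coordinates of Lemma \ref{lem:puntoscruz}(ii)--(iii) together with the Gleizer--Postnikov inequalities, of when $x+z_j=y+z_{j'}$ for valid strings $x,y$ --- neither of which is supplied.
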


 \begin{example}
 \label{ex:adjoint2}
 Let us go back to Example \ref{ex:adjoint}. We see that if $\mathcal{B}_{triv} = \{ \cdot\}$ denotes the trivial crystal for $\mathfrak{sl}(5,\mathbb{C})$, then $\psi^{5}_{4}(\cdot) = z_4$. 
Consider $\mathcal{P}(\theta) : = \op{im}(\psi^{5}_{4})^{c}$. Then $\op{res}^{5}_{4}(\mathcal{B}(\theta)) \cap \mathcal{P}(\theta)$ contains a crystal isomorphic to the crystal of the adjoint representation for $\mathfrak{sl}(4,\mathbb{C})$, with highest weight vertex $f_{4}(v_{\theta})$, where $v_{\theta} \in \mathcal{B}(\theta)$ is the highest weight vertex. Now we remove $\op{im}(\psi^{5}_{3})$ from $\mathcal{P}(\theta)$ and subsequently $\op{im}(\psi^{5}_{2})$ from what remains, to get our big atom $\mathcal{A}(\theta)$, which has weight zero element corresponding to $z_1$.
\end{example}

\begin{example}
\label{ex:sl3}
This is a continuation of Example \ref{ex:adjoint}. Consider $\mathfrak{g} = \mathfrak{sl}(3,\mathbb{C})$ and $\lambda = \lambda_{1}\omega_{1} + \lambda_{2}\omega_{2}$. Note that if either  $\lambda_1 = 0$ or $\lambda_2 = 0$ then $\mathcal{B}(\lambda)$ is already an atom. Let $\lambda$ be a dominant weight such that $\lambda_{1} \neq 0$ and $\lambda_{2} \neq 0$. Then $\lambda - \theta$ is again a dominant weight.  Define 
\[\mathcal{A}(\lambda):= \psi^{3}_{2}(\mathcal{B}(\lambda -\theta))^{c} \subset \mathcal{B}(\lambda).\]

We claim that $\mathcal{A}(\lambda)$ coincides with the \textit{big atom} $\mathcal{A}^{pat}(\lambda)$ 
 defined by Patimo \cite{Pat}, and, independently, by Lenart--Lecouvey \cite{LL21}. We use the definition of $\mathcal{A}^{pat}(\lambda)$ as the $f_2$ and $W$-orbit of the highest weight element. We refer the reader to \cite{BSch17} for the definition of the action of the Weyl group on a crystal (the simple reflection $s_i$ acts by reversing the corresponding $i$-string). To show this, we will work on the string polytope $\mathcal{S}_{\mathbf{i}}(\lambda)$, where $\mathbf{i} = (2,1,2)$. The map $\psi^{3}_{2}$ is defined by 
\begin{align*}
x = (x_{1},x_{2},x_{3}) \mapsto (x_{1},x_{2}+1,x_{3}+1).
\end{align*}

The defining inequalities for $\mathcal{S}_{\mathbf{i}}(\lambda)$ are:
\begin{align*}
0 \leq x_{3} & \leq \lambda_{2}\\
0 \leq x_3 &\leq x_{2}  \leq \lambda_{1} + x_{3}\\
0 \leq x_{1} & \leq \lambda_{2} -2x_{3}+x_{2}.
\end{align*}

Whenever $x_2 > 0$ and $x_{3} > 0$ in $\mathcal{A}(\lambda)$, the elements in $\mathcal{A}(\lambda)$ must additionally  also satisfy $x_2 > \lambda_1 + x_3 - 1$, therefore $x_2 - x_3 = \lambda_1 $. 
Note that if  we take an element $x = (x_1, x_2,x_3)$ with fixed weight $\op{wt}(x) = \mu$, then $x_1$ is uniquely determined by the values of $x_2$ and $x_3$. This means that necessarily $\mathcal{A}(\lambda) \cap \mathcal{S}_{\mathbf{i}}(\lambda)_{\mu}$ consists of at most one element. Now, the elements $x \in \mathcal{S}_{\mathbf{i}}(\lambda)$ such that either $x_2 = 0$ or $x_3 =0$ all belong to $\mathcal{A}(\lambda)$. If both are zero, then $x_1$ determines $x$ uniquely since its weight is $\lambda - x_1 \alpha_2$. If $x_3 = 0$ and $x_2 >0$, or if $x_3 >0$ and $x_2 =0$, then $x$ is similarly uniquely determined by its weight. To prove that $\mathcal{A}^{pat}(\lambda) = \mathcal{A}(\lambda)$, it is now enough to show that $\mathcal{A}(\lambda) \supset \mathcal{A}^{pat}(\lambda)$. This follows from the following claim:

\begin{align*}
\hbox{The set }\mathcal{A}(\lambda) \hbox{ is stable under the action of the Weyl group and the action of } f_2.
\end{align*}

The first part of the statement follows since the image $\psi^{3}_{2}(\mathcal{B}(\lambda - \theta))$ is stable under the action of the Weyl group by Corollary \ref{thm:crystal1}. It follows directly from the definitions that it is not possible to have $x \in \mathcal{A}(\lambda)$ such that $f_2(x) = (x_1 +1, x_2, x_3) \in \psi^{3}_{2}(\mathcal{B}(\lambda - \theta))$.

\end{example}

\begin{remark}
Consider the model for the crystal $\mathcal{B}(\lambda)$ given by semi-standard Young tableaux of shape $\lambda$. Then it is not difficult to show, using the crystal structure on semi-standard Young tableaux, that the map 

\[\phi^{n}_{n-1}: \mathcal{B}(\lambda) \rightarrow \mathcal{B}(\lambda + \theta)\]

\noindent is given by adding to $T \in \mathcal{B}(\lambda)$ the column of length $n-1$ and entries $1,2,\dots, n-1$ as the left-most column and a box with entry $n$ at the end of the first row. Note that if $\op{ch}(-)$ denotes the Lascoux--Sch\"utzenberger charge (cf./ \cite[Definition 2.4.22]{butler1994subgroup}) or \cite{lascoux1978conjecture}, then from this one sees immediately that for any $T \in \mathcal{B}(\lambda)$:

\[\op{ch}(\phi^{n}_{n-1}(T)) = \op{ch}(T)+1. \]
\end{remark}

\bibliographystyle{abbrv}
\bibliography{bibliography}

\begin{thebibliography}{10}

\bibitem{BerensteinZelevinsky}
A.~Berenstein and A.~Zelevinsky.
\newblock {Tensor product multiplicities, canonical bases and totally positive
  varieties}.
\newblock {\em {Inventiones Mathematicae}}, 143(2):77--128, 2001.

\bibitem{BossingerFourier}
L.~Bossinger and G.~Fourier.
\newblock String cone and superpotential combinatorics for flag and {S}chubert
  varieties in type {A}.
\newblock {\em J. Combin. Theory Ser. A}, 167:213--256, 2019.

\bibitem{BSch17}
D.~Bump and A.~Schilling.
\newblock {\em Crystal Bases. Representations and Combinatorics.}
\newblock World Scientific Publishing Co. Pte. Ltd., 2017.

\bibitem{butler1994subgroup}
L.~M. Butler.
\newblock {\em Subgroup lattices and symmetric functions}, volume 539.
\newblock American Mathematical Soc., 1994.

\bibitem{caldero:toric}
P.~Caldero.
\newblock Toric degenerations of {S}chubert varieties.
\newblock {\em Transform. Groups}, 7(1):51--60, 2002.

\bibitem{sagemath}
T.~S. Developers.
\newblock {\em SageMath}.
\newblock {S}ageMath, the {S}age {M}athematics {S}oftware {S}ystem ({V}ersion
  9.2), 2020.

\bibitem{FFL_essential}
X.~Fang, G.~Fourier, and P.~Littelmann.
\newblock Essential bases and toric degenerations arising from birational
  sequences.
\newblock {\em Adv. Math.}, 312:107--149, 2017.

\bibitem{bea_polyhedral}
V.~Genz, G.~Koshevoy, and B.~Schumann.
\newblock Polyhedral parametrizations of canonical bases \& cluster duality.
\newblock {\em Adv. Math.}, 369:107178, 41, 2020.

\bibitem{beaglebvolker}
V.~Genz, G.~Koshevoy, and B.~Schumann.
\newblock {Combinatorics of canonical bases revisited: string data in type A}.
\newblock {\em Transformation Groups}, pages 1--29, 2022.

\bibitem{Gleizer-Postnikov}
O.~Gleizer and A.~Postnikov.
\newblock Littlewood-{R}ichardson coefficients via {Y}ang-{B}axter equation.
\newblock {\em Internat. Math. Res. Notices}, 14:741--774, 2000.

\bibitem{kashiwara1990crystalizing}
M.~Kashiwara.
\newblock Crystalizing the q-analogue of universal enveloping algebras.
\newblock {\em Communications in Mathematical Physics}, 133:249--260, 1990.

\bibitem{kashiwara1991crystal}
M.~Kashiwara.
\newblock On crystal bases of the q-analogue of universal enveloping algebras.
\newblock {\em Duke Mathematical Journal}, 63(2):465--516, 1991.

\bibitem{kashiwara1994crystal}
M.~Kashiwara.
\newblock Crystal bases of modified quantized enveloping algebra.
\newblock {\em Duke Mathematical Journal}, 73(2):383--413, 1994.

\bibitem{Kashiwara:crystal}
M.~Kashiwara.
\newblock On crystal bases.
\newblock In {\em Representations of groups ({B}anff, {AB}, 1994)}, volume~16
  of {\em CMS Conf. Proc.}, pages 155--197. Amer. Math. Soc., Providence, RI,
  1995.

\bibitem{lascoux1978conjecture}
A.~Lascoux and M.-P. Sch{\"u}tzenberger.
\newblock Sur une conjecture de ho foulkes.
\newblock {\em CR Acad. Sci. Paris S{\'e}r. AB}, 286(7):A323--A324, 1978.

\bibitem{LL21}
C.~Lecouvey and C.~Lenart.
\newblock {Atomic decomposition of characters and crystals.}
\newblock {\em {Adv. Math.}}, 276(51), 2021.

\bibitem{LPP}
N.~Libedinsky, L.~Patimo, and D.~Plaza.
\newblock {Pre-canonical bases on affine {H}ecke algebras.}
\newblock {\em { Adv. Math.}}, 399:Paper No. 108255, 2022.

\bibitem{conescrystalspatterns}
P.~Littelmann.
\newblock Cones, crystals, and patterns.
\newblock {\em Transform. Groups}, 3(2):145--179, 1998.

\bibitem{Lusztig1}
G.~Lusztig.
\newblock Canonical bases arising from quantized enveloping algebras.
\newblock {\em J. Amer. Math. Soc.}, 3(2):447--498, 1990.

\bibitem{Lusztig2}
G.~Lusztig.
\newblock Finite-dimensional {H}opf algebras arising from quantized universal
  enveloping algebra.
\newblock {\em J. Amer. Math. Soc.}, 3(1):257--296, 1990.

\bibitem{Pat}
L.~Patimo.
\newblock {Charges via the affine Grassmannian}.
\newblock {\em {preprint arXiv: 2106.02564}}, 2021.

\bibitem{PT23}
L.~Patimo and J.~Torres.
\newblock {Atoms and Charge in type $C_{2}$}.
\newblock {\em {Algebraic Combinatorics}}, 8(2):521--574, 2025.

\end{thebibliography}

\end{document}